\theoremstyle{plain}
\newtheorem{thm}{\protect\theoremname}[section]
\theoremstyle{definition}
\newtheorem{defn}[thm]{\protect\definitionname}
\theoremstyle{plain}
\newtheorem{lem}[thm]{\protect\lemmaname}
\theoremstyle{plain}
\newtheorem{prop}[thm]{\protect\propositionname}
\theoremstyle{remark}
\newtheorem{rem}[thm]{\protect\remarkname}
\providecommand{\definitionname}{Definition}
\providecommand{\lemmaname}{Lemma}
\providecommand{\propositionname}{Proposition}
\providecommand{\remarkname}{Remark}
\providecommand{\theoremname}{Theorem}
\begin{document}
\title{\textbf{String-Averaging Methods for Best Approximation to Common
Fixed Point Sets of Operators: The Finite and Infinite Cases}}
\author{Yair Censor\thanks{Corresponding author: Yair Censor, Email: yair@math.haifa.ac.il.}
and Ariel Nisenbaum\\
Department of Mathematics\\
The University of Haifa\\
Haifa 3498838, Israel}
\date{December 22, 2020. Revised: March 5, 2021.}
\maketitle
\begin{abstract}
{\normalsize{}String-averaging is an algorithmic structure used when
handling a family of operators in situations where the algorithm at
hand requires to employ the operators in a specific order. Sequential
orderings are well-known and a simultaneous order means that all operators
are used simultaneously (in parallel). String-averaging allows to
use strings of indices, constructed by subsets of the index set of
all operators, to apply the operators along these strings and then
to combine their end-points in some agreed manner to yield the next
iterate of the algorithm. String-averaging methods were discussed
and used for solving the common fixed point problem or its important
special case of the convex feasibility problem. In this paper we propose
and investigate string-averaging methods for the problem of best approximation
to the common fixed point set of a family of operators. This problem
involves finding a point in the common fixed point set of a family
of operators that is closest to a given point, called an anchor point,
in contrast with the common fixed point problem that seeks any point
in the common fixed point set.}{\normalsize\par}

{\normalsize{}We construct string-averaging methods for solving the
best approximation problem to the common fixed points set of either
finite or infinite families of firmly nonexpansive operators in a
real Hilbert space. We show that the simultaneous Halpern-Lions-Wittman-Bauschke
algorithm, the Halpern-Wittman algorithm and the Combettes algorithm,
which were not labeled as string-averaging methods, are actually special
cases of these methods. Some of our string-averaging methods are labeled
as ``static'' because they use a fixed pre-determined set of strings.
Others are labeled as ``quasi-dynamic'' because they allow the choices
of strings to vary, between iterations, in a specific manner and belong
to a finite fixed pre-determined set of applicable strings. For the
problem of best approximation to the common fixed point set of a family
of operators, the full dynamic case that would allow strings to unconditionally
vary, between iterations, remains unsolved although it exists and
is validated in the literature for the convex feasibility problem
where it is called ``dynamic string-averaging''.\medskip{}
}{\normalsize\par}

\textbf{Keywords}: String-averaging; {\normalsize{}common fixed points;
best approximation problem; firmly nonexpansive operators; HLWB algorithm;
Halpern theorem; }projection methods.
\end{abstract}

\section{Introduction \label{sec:Introduction}}

String-averaging algorithmic structures are used for handling a family
of operators in situations where the algorithm needs to employ the
operators in a specific order. String-averaging allows to use strings
of indices, taken from the index set of all operators, to apply the
operators along these strings and to combine their end-points in some
agreed manner to yield the next iterate of the algorithm.

If a point $p$ of a set $C$ is nearest to a given point $x$ in
space, known as the anchor, then $p$ is a \textit{best approximation
to $x$ from $C$}. In the case when $C$ is the common fixed points
set of a family of self-mapping operators, the problem of finding
such a $p$ is known as the \textit{best approximation problem} (BAP).
In case the fixed point sets are all convex, this problem\textbf{
}is a special case of two well-known problems:\textbf{ }\textit{The
convex feasibility problem} (CFP), which is to find a (any) point
in the intersection of closed convex sets, and the \textit{common
fixed point problem} (CFPP), where the closed convex sets in CFP are
the fixed point sets of operators of a given family. The CFP, the
CFPP and BAP are widely studied, and are useful in mathematics and
various physical science (see, e.g., Bauschke and Borwein \cite{BauschkeBorwein},
Reich and Zalas \cite{ReichZalas} and Cegielski \cite{Cegielski},
to name but a few), including in the context of string-averaging for
the CFP and the CFPP (see, e.g., Censor and Zaslavski \cite{CensorZaslavski1,CensorZaslavski2},
Bargetz, Reich and Zalas \cite{Bargetz} and \cite{ReichZalas}).

Nevertheless, string-averaging algorithmic approaches for solving
the BAP were, to the best of our knowledge, not proposed neither investigated.
Motivated by this, we devote our research presented here, to developing
string-averaging algorithmic schemes for finding the best approximation
to the common fixed points set of a, either finite or infinite, family
of firmly nonexpansive operators. Besides pure mathematical interest,
see, e.g., Dye, Khamsi and Reich \cite{reich1991} and references
therein, motivation for the infinite case can also come from practical
real-world situations. This point is succinctly made in the introduction
of the recent paper of Kong, Pajoohesh and Herman \cite{kong2019}.
Although they refer to the infinite CFP their arguments can serve
also to justify the infinite case for the BAP. The case of infinitely
many sets is useful in applications where there is a potential infinity
of samples or measurements, each one of which gives rise to a convex
set that contains the point we wish to recover, see, for example Blat
and Hero \cite{blat2006}.

\subsection{Contribution and structure of the paper}

Our string-averaging methods are applied to families of firmly nonexpansive
operators and they follow the principles of the original string-averaging
process suggested by Censor, Elfving and Herman in \cite{Pink-book}.
Consider a set $\mathcal{M}$ containing all pairs of the form $(\varOmega,w)$,
where $\varOmega$ is a set of finite strings of indices, by which
the string operators are formed, and $w$ is a function that attaches
to every string $t\in\varOmega$ a positive real weight $w(t)$ such
that the sum of weights equals $1$.

We construct what we call a ``static'' string-averaging methods
in which a fixed pre-determined single pair $(\varOmega,w)\in\mathcal{M}$
is used throughout the iterative process. This approach solves the
BAP for a common fixed points set of either a finite or an infinite
family of firmly nonexpansive operators and we show that the simultaneous
Halpern-Lions-Wittman-Bauschke algorithm (see, e.g., in Censor \cite[Algorithm 5]{Censor}),
the Halpern-Wittman algorithm (see, e.g., \cite[Algorithm 4.1]{BauschkeBorwein})
and the Combettes algorithm \cite{Combettes}, which were labeled
as sequential and simultaneous algorithms, respectively, are special
cases of our static string-averaging methods.

We extend the ``static'' notion in the finite case to the situation
where a finite number of pairs $(\varOmega,w)\in\mathcal{M}$ can
be used, and call it a ``quasi'' dynamic string-averaging method
due to its resemblance to the dynamic string-averaging scheme (see
\cite{CensorZaslavski1}). This is done by ordering a finite pre-defined
set of pairs $(\varOmega,w)\in\mathcal{M}$, and forming a finite
family of operators of the form $\sum_{t\in\Omega}w(t)T[t](x)$, for
every such $(\varOmega,w)$, where $T[t]$ are string operators. These
are, in turn, used in a cyclic manner.

With the aid of a construction similar to the quasi dynamic string-averaging
algorithm, we propose a simultaneous string-averaging method.

We focus here on Halpern's algorithm and the Halpern-Lions-Wittman-Bauschke
algorithm. There are, however, several other iterative processes for
solving the BAP, which are not treated here, but can possibly be also
extended via the string-averaging algorithmic concept, e.g., Dykstra's
algorithm, see \cite[Subsection 30.2]{BauschkeCombettes} and Haugazeau's
method, see \cite[Subsection 30.3]{BauschkeCombettes}, where further
references can be found.

The results presented here for the infinite case complement our earlier
work in Aleyner and Censor \cite{Aleyner} where a sequential algorithm
for solving the BAP to the common fixed points set of a semigroup
of nonexpansive operators in Hilbert space was studied. Only a sequential
iteartive process was investigated there but the framework was more
general due to the kind of operators (nonexpansive) and the size of
the pool of opeartors (a semigroup, not limited to the countable case).

This is a theoretical work in the spirit of the theoretical developments
in fixed point theory presented in many of the earlier referenced
papers. The string-averaging approach is actually not a single algorithm
but an ``algorithmic scheme'' so that every individual choice of
lengths and assignments of strings will give rise to a different algorithm.
In this way our string-averaging algorithmic scheme generalizes earlier
algorithms that become special cases of it. It is not practical to
conduct a numerical experiment without having a specific scientific
or real-world problem in hand but it is expected that researchers
who need to solve the best approximation problem to common fixed point
sets will find here valuable algorithmic information.

The paper is structured as follows. After preliminaries in Section
\ref{sec: Preliminaries}, the work is divided into two main parts:
A first part where the given family of firmly nonexpansive operators
is finite, in Sections \ref{sec:The-finite-case} and \ref{sec:Special case - orthogonal projctions},
and a second part where the given family of firmly nonexpansive operators
is a countable family, in Section \ref{sec:The-infinite-case}. The
static, the quasi-dynamic and the simultaneous string-averaging methods
can be found in Subsections \ref{subsec:static-SA-finite}, \ref{subsec:The-quasi-dynamic}
and \ref{subsec: The simultaneous QDSA}, respectively. In Subsection
\ref{sec:Special case - orthogonal projctions}, we show for which
choices of pairs ($\varOmega,w)$, the simultaneous version of the
Halpern-Lions-Wittman-Baucschke algorithm and the Halpern-Wittman
algorithm are special cases of our static string-averaging approach.
In Subsection \ref{subsec:staic-SA-infinite}, we propose our static
string-averaging method for solving the BAP in the infinite case,
from which the well-known Combettes algorithm follows.

\section{Previous related works \label{sec:Previous-related-works}}

An early approach, based on projection operators, is John von Neumann's
alternating projection method \cite{NeumanBook}, for solving the
BAP with two closed linear subspaces. It has been widely studied and
generalized by many authors, see, e.g., Bauschke and Borwein \cite{BauschkeBorweinAlternatingProjection},
Deutsch \cite{DeutchAlternatingProjections}, Kopecká and Reich \cite{ReichKopecka}
and Deutsch and Hundal \cite{DeutschHundal}. For the general case
of arbitrary convex sets, the Dykstra's algorithm is a suitable modification
of the alternating projections method to solve the BAP. This algorithm
was first introduced by Dykstra in \cite{Dykstra} for closed and
convex cones in finite-dimensional Euclidean spaces, and later extended
by Boyle and Dykstra in \cite{BoyleDykstra} for closed and convex
sets in a Hilbert space. Additional projections approaches can be
found in Aragón Artacho and Campoy \cite{Artacho} and Bergman, Censor,
Reich, and Zepkowitz-Malachi \cite{ReichProjectionsHyperplanes}.

For a recent bibliography of papers and monographs on projection methods
see Censor and Cegielski \cite{CensorCegielski}. A well-known method
for solving the more general BAP is Halpern's algorithm \cite{Halpern},
whose strong convergence to the solution, under various sets of assumptions
on the parameters, has been proved by several authors. The main contributions
are due to Lions, to Wittman and to Bauschke, see Bauschke's paper
\cite{HLWB}. Additional literature background of the BAP and related
problems appear in the excellent literature review of \cite{Artacho},
from which we adapted some of the above.

The literature on string-averaging algorithms has also expanded since
its first presentation in \cite{Pink-book} and further related work
has been published. In Crombez \cite{Crombez}, a string-averaging
algorithm for solving the common fixed point problem for a finite
family of paracontracting operators is proposed. Censor and Segal
in \cite{SegalCensor} suggested a string-averaging solution for the
common fixed point problem for a finite family of sparse operators.
In Censor and Tom \cite{CensorTom}, a study on the behavior of a
string-averaging algorithm for inconsistent convex feasibility problems
is done.

A generalized notion of string-averaging is the notion of dynamic
string-averaging, in which one is able to use in the iterative process
pairs $(\varOmega,w)$ from a pre-defined set, denoted by $\mathcal{M}_{*}\subset\mathcal{M}$
(see, e.g., \cite{CensorZaslavski2}). In every $(\varOmega,w)\in\mathcal{M}_{*}$,
the length of every $t\in\varOmega$ is bounded and $w(t)$ is bounded
away from zero. An example of a dynamic string-averaging algorithm
can be found in \cite{CensorZaslavski1}. That iterative process generates
a convergent sequence whose limit is a point in the intersection of
a finite family of closed convex sets, namely, a solution to the CFP.
Recently, Censor, Brooke and Gibali in \cite{CQmethods} were able
to construct a dynamic string-averaging method for solving the multiple-operator
split common fixed point problem (see, e.g., \cite[Problem 1]{CQmethods})
for families of cutters in Hilbert spaces.

\section{Preliminaries \label{sec: Preliminaries}}

Throughout our work we denote by $H$ a real Hilbert space with the
inner product $\left\langle \cdot\,,\,\cdot\right\rangle $ and induced
norm $\Vert\cdot\Vert$, and by $\mathbb{N}$ the set of all natural
numbers including zero. Let $D\subseteq H$ be a nonempty set and
let $T:D\rightarrow H$ be an operator. A vector $x\in D$ is a fixed
point of $T$ if it satisfies $T(x)=x.$ The set of all fixed points
of $T$ is denoted by $Fix(T):=\left\{ x\in D\,|\,T(x)=x\right\} .$
If $D$ is a closed convex set, then for every $u\in H$ there exists
a unique point $y\in D$ such that 
\begin{equation}
\Vert u-y\Vert=\text{inf}\left\{ \Vert u-d\Vert\,|\,d\in D\right\} .
\end{equation}
Such a point $y$ is called the projection of $u$ onto $D$ and denoted
by $P_{D}(u).$

We now recall some definitions regarding various classes of operators.
\begin{defn}
\label{def: nonexpansive-class-operators}Let $D$ be a nonempty subset
of $H$ and let $T:D\rightarrow H$. Then $T$ is:\\
(i) Nonexpansive (NE) if 
\begin{equation}
\Vert T(x)-T(y)\Vert\leq\Vert x-y\Vert,\:\forall x,y\in D.
\end{equation}
(ii) Firmly nonexpansive (FNE) if
\begin{equation}
\Vert T(x)-T(y)\Vert^{2}\leq\left\langle x-y,T(x)-T(y)\right\rangle ,\:\forall x,y\in D.
\end{equation}
(iii) Quasi-nonexpansive (QNE) if

\begin{equation}
\Vert T(x)-y\Vert\leq\Vert x-y\Vert,\:\forall x\in D,\:y\in Fix(T).
\end{equation}
(iv) Strictly quasi-nonexpansive (sQNE) if
\begin{equation}
\Vert T(x)-y\Vert<\Vert x-y\Vert,\:\forall x\in D\setminus Fix(T),\:y\in Fix(T).
\end{equation}
(v) $C$-strictly quasi-nonexpansive ($C$-sQNE) if $T$ is quasi-nonexpansive
and 
\begin{equation}
\Vert T(x)-y\Vert<\Vert x-y\Vert,\:\forall x\in D\setminus Fix(T),\:y\in C,
\end{equation}
where $C\neq\emptyset$ and $C\subseteq Fix(T)$.

A useful fact which can be found, e.g., in \cite[Proposition 2.1.11]{Cegielski},
is that the fixed point set of a nonexpansive operator is a closed
convex set. Since an intersection of any collection of closed convex
sets is a closed convex set (see, e.g., Lemma 1.13 and Example 2.3
in Deutsch \cite{DeutschBook}) it follows that, the set of common
fixed point sets of a family of nonexpansive operators of any cardinality
is a closed convex set and, thus, the projection of any given point
$u$ onto this set is well-defined, provided that the intersection
is nonempty.

All our string-averaging methods use sequences of real numbers, called
steering sequences.
\end{defn}

\begin{defn}
\label{def:Steering-Seq}(\textbf{steering sequences}). A real sequence
$(\lambda_{k})_{k\in\mathbb{N}}$ is called a \textit{steering sequence}
if it has the following properties:
\begin{equation}
\lambda_{k}\in[0,1]\enspace\text{for}\enspace\text{all}\enspace\,k\geq0,\enspace\text{and}\enspace\underset{k\rightarrow\infty}{lim}\lambda_{k}=0,\label{eq:SteeringSeq-first condition}
\end{equation}
\begin{equation}
\sum_{k=0}^{\infty}\lambda_{k}=+\infty\,\text{(\text{or,}\enspace\text{equivalently, }}\prod_{k=0}^{\infty}(1-\lambda_{k})=0),\label{eq:SteeringSeq-second condition}
\end{equation}
\begin{equation}
\sum_{k=0}^{\infty}|\lambda_{k+1}-\lambda_{k}|<\infty.\label{eq:SteeringSeq-third condition}
\end{equation}
\end{defn}

Observe that although $\lambda_{k}\in[0,1]$ the definition rules
out the option of choosing all $\lambda_{k}$ equal to zero or all
equal to one because of contradictions with the other properties.
Infinitely many zeros are possible only if the remaining nonzero elements
obey all properties. The third property in (\ref{eq:SteeringSeq-third condition})
was introduced by Wittmann, see, e.g., the recent review paper of
López, Martin-Márquez and Xu \cite{Lopez-Xu}.

Lemma \ref{lemma-1- link} below is composed of several known claims
which will be used in the sequel. We supply pointers to the proof
of the lemma for completeness.
\begin{lem}
\label{lemma-1- link} Let $D$ be a nonempty closed convex subset
of $H$ and let $T:D\rightarrow D$ be FNE. Then\\
(i) T is NE. \\
(ii) If $Fix(T)\neq\emptyset$ then T is QNE.\\
(iii) If $Fix(T)\neq\emptyset$ then T is sQNE.
\end{lem}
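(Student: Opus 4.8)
The plan is to extract from the FNE inequality a single sharpened estimate that quantifies how much firmer $T$ is than a merely nonexpansive operator, and then to read off all three assertions from it. Writing $a := x - y$ and $b := T(x) - T(y)$, the FNE condition reads $\|b\|^{2} \leq \langle a, b\rangle$. Combining this with the elementary polarization identity $\|a\|^{2} - \|a - b\|^{2} = 2\langle a, b\rangle - \|b\|^{2}$ and noting that $a - b = (I - T)(x) - (I - T)(y)$, I would first establish the master inequality
\begin{equation}
\|T(x) - T(y)\|^{2} + \|(I - T)(x) - (I - T)(y)\|^{2} \leq \|x - y\|^{2}, \quad \forall x, y \in D.
\end{equation}
This is the only computation in the proof, and it is routine: the FNE hypothesis $\langle a, b\rangle \geq \|b\|^{2}$ turns the right-hand side of the polarization identity into a lower bound of $\|b\|^{2}$, which rearranges into the displayed estimate.

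From the master inequality the three claims follow directly. For (i), discarding the nonnegative second summand yields $\|T(x) - T(y)\|^{2} \leq \|x - y\|^{2}$, hence $T$ is NE. For (ii), assuming $Fix(T) \neq \emptyset$ and taking any $y \in Fix(T)$ so that $T(y) = y$, part (i) gives $\|T(x) - y\| = \|T(x) - T(y)\| \leq \|x - y\|$ for every $x \in D$, which is exactly the QNE property; alternatively, (ii) is subsumed by the non-strict form of (iii).

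The substance of the lemma is assertion (iii). Fixing $y \in Fix(T)$ again annihilates the complement term at $y$, since $(I - T)(y) = 0$, so the master inequality collapses to $\|T(x) - y\|^{2} + \|x - T(x)\|^{2} \leq \|x - y\|^{2}$. For $x \in D \setminus Fix(T)$ we have $T(x) \neq x$, whence $\|x - T(x)\|^{2} > 0$, and this forces the strict inequality $\|T(x) - y\| < \|x - y\|$, establishing that $T$ is sQNE. The one point demanding care — and the only place where the hypothesis $x \notin Fix(T)$ enters — is precisely this strict positivity of the defect $\|x - T(x)\|^{2}$; everything else is an immediate consequence of the master inequality. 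I would therefore treat deriving that inequality as the crux, after which (i)--(iii) require no further work.
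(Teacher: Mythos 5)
Your proof is correct, but it takes a different route from the paper: the paper does not argue at all, instead delegating (i) and (ii) to \cite[Theorem 2.2.4]{Cegielski} and \cite[Lemma 2.1.20]{Cegielski} and (iii) to a statement on page 70 of \cite{BauschkeCombettes}, whereas you give a self-contained derivation. Your ``master inequality''
\begin{equation}
\Vert T(x)-T(y)\Vert^{2}+\Vert(I-T)(x)-(I-T)(y)\Vert^{2}\leq\Vert x-y\Vert^{2}
\end{equation}
is in fact the standard equivalent characterization of firm nonexpansiveness (it is essentially the content of the cited Theorem 2.2.4 of Cegielski and of Proposition 4.4 in Bauschke--Combettes), and your algebra deriving it from $\Vert b\Vert^{2}\leq\langle a,b\rangle$ via the polarization identity is sound; the deductions of (i) by discarding the second summand, (ii) by specializing $y\in Fix(T)$, and (iii) by noting that $(I-T)(y)=0$ collapses the inequality to $\Vert T(x)-y\Vert^{2}+\Vert x-T(x)\Vert^{2}\leq\Vert x-y\Vert^{2}$ with $\Vert x-T(x)\Vert>0$ forcing strictness, are all valid, and you correctly isolate the only delicate point, namely where $x\notin Fix(T)$ enters. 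What the paper's approach buys is brevity and reliance on established references; what yours buys is a unified, elementary argument in which all three claims fall out of a single inequality, making the dependence of each conclusion on each hypothesis transparent --- a perfectly acceptable, and arguably more instructive, substitute for the paper's proof by citation.
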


\begin{proof}
For (i) and (ii) see \cite[Theorem 2.2.4]{Cegielski} and \cite[Lemma 2.1.20]{Cegielski},
respectively. (iii) follows from the statement on page 70 of \cite{BauschkeCombettes}.
\end{proof}

\section{String-averaging methods for best approximation to the common fixed
points set of a family of firmly nonexpansive operators: General\label{sec: string-averaging methods - FNEs}}

In our present work we consider the best approximation problem with
respect to the common fixed points set of a family of FNEs. Our overall
aim is to develop and investigate string-averaging algorithms for
this problem. In the string-averaging algorithmic scheme, one constructs
from a given family of operators, a family of, so-called, string-operators
which are certain compositions of some of the operators from the given
family.

According to these the string-averaging algorithm proceeds in its
iterative process. We show that such string-averaging algorithmic
schemes converge to the projection of a given point (commonly called
the ``anchor'') to the common fixed point set of the given family.

We are able to ensure convergence of our string-averaging methods
by demanding that the operators of the given family be FNEs. There
are well-known links between the classes of FNEs, NEs, sQNEs and QNEs,
defined above, that help us in our analysis. To take advantage of
these links we use Corollary 4.50 and Proposition 4.47 in Bauschke
and Combettes' book \cite{BauschkeCombettes}, which are proved for
a finite family of sQNEs and a finite family of QNEs, respectively.
We extend the usage of \cite[Proposition 4.47]{BauschkeCombettes}
to the countable case in order to determine, in Section \ref{sec:The-infinite-case},
the point to which our string-averaging algorithm converges to.

\section{String-averaging methods for best approximation to the common fixed
points set of a family of firmly nonexpansive operators: The finite
case\label{sec:The-finite-case}}

In our work we develop string-averaging algorithms for two distinct
situations. One is the finite case wherein the family of given FNEs
is finite. The other is when the family of given FNEs is countably
infinite. In this section we consider the finite case. We start by
defining the terms which we use throughout this section.
\begin{defn}
\label{def:IndexVec-StringOp-FitOmega-FiniteCase}Let $D$ be a nonempty
closed convex subset of $H$, let $(T_{i})_{i=1}^{m}$ be a finite
family of self-mapping operators on $D.$ An\textit{ index vector}
is a vector of the form $t=(t_{1},t_{2},\ldots,t_{p})$ such that
$t_{\ell}\in\{1,2,\ldots,m\}$ for all $\ell\in\{1,2,\ldots,p\}$.
For a given index vector $t=(t_{1},t_{2},\ldots,t_{q})$ we denote
its length (i.e., the number of its components) by $\gamma(t)=q$,
and define the operator $T[t]$ as the composition of the operators
$T_{i}$ whose indices appear in the index vector $t$, namely,
\begin{equation}
T[t]:=T_{t_{q}}T_{t_{q-1}}\cdots T_{t_{1}},\label{eq:string}
\end{equation}
 and call it a \textit{string operator}. A finite set $\varOmega$
of index vectors is called \textit{fit} if for each $i\in\left\{ 1,2,\ldots,m\right\} $,
there exists a vector $t=(t_{1},t_{2},\ldots,t_{p})\in\varOmega$
such that $t_{\ell}=i$ for some $\ell\in\left\{ 1,2,\ldots,p\right\} $.
As in \cite{CensorZaslavski1}, we denote by $\mathcal{M}$ the collection
of all pairs $(\varOmega,w)$, where $\Omega$ is a fit finite set
of index vectors and $w:\Omega\rightarrow(0,1]$ is such that $\sum_{t\in\varOmega}w(t)=1$.
\end{defn}

As mentioned above, \cite[Corollary 4.50]{BauschkeCombettes} and
\cite[Proposition 4.47]{BauschkeCombettes} are cornerstones to our
proofs of convergence. The following are slight rephrased versions
of them, respectively, adapted to our notations and needs.
\begin{prop}
\label{prop-1-Corollary 4.50 BC} Let $D$ be a nonempty subset of
$H$ and let $(T_{i})_{i=1}^{m}$ be a finite family of self sQNEs
on $D$ such that $\cap_{i=1}^{m}Fix(T_{i})\neq\emptyset$ and set
$T=T_{1}T_{2}\cdots T_{m}$. Then $T$ is sQNE and $Fix(T)=\cap_{i=1}^{m}Fix(T_{i})$.
\end{prop}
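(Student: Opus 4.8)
The plan is to first record the trivial inclusion and the quasi-nonexpansivity of the composition, and then to extract both $Fix(T)=\cap_{i=1}^{m}Fix(T_{i})$ and the strict quasi-nonexpansivity of $T$ from a single equality-propagation argument. Write $F:=\cap_{i=1}^{m}Fix(T_{i})$, which is nonempty by hypothesis. The inclusion $F\subseteq Fix(T)$ is immediate: if $x\in F$ then $T_{i}(x)=x$ for every $i$, so applying the operators in the order prescribed by (\ref{eq:string}) leaves $x$ fixed, whence $T(x)=x$.

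For the substantive direction I would fix an arbitrary $y\in F$ and follow the partial compositions. Set $z_{0}:=x$ and $z_{k}:=T_{m-k+1}(z_{k-1})$ for $k=1,\ldots,m$, so that $z_{m}=T(x)$ and each $z_{k}\in D$. Since every sQNE operator is in particular QNE and $y\in Fix(T_{i})$ for all $i$, each step is nonexpansive toward $y$, which gives the monotone chain
\begin{equation}
\Vert T(x)-y\Vert=\Vert z_{m}-y\Vert\leq\Vert z_{m-1}-y\Vert\leq\cdots\leq\Vert z_{0}-y\Vert=\Vert x-y\Vert.
\end{equation}
In particular $T$ is QNE with respect to $F$.

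The heart of the proof, and the step I expect to carry the real content, is the analysis of the equality case. Suppose $\Vert T(x)-y\Vert=\Vert x-y\Vert$. Then every inequality in the chain collapses to an equality, so $\Vert z_{k}-y\Vert=\Vert z_{k-1}-y\Vert$ for each $k$. Because $T_{m-k+1}$ is sQNE and $y\in Fix(T_{m-k+1})$, the defining strict inequality rules out $z_{k-1}\in D\setminus Fix(T_{m-k+1})$; hence $z_{k-1}\in Fix(T_{m-k+1})$, which forces $z_{k}=z_{k-1}$. Propagating through $k=1,\ldots,m$ yields $x=z_{0}=z_{1}=\cdots=z_{m}$ together with $x=z_{k-1}\in Fix(T_{m-k+1})$ for every $k$; since each index of $\{1,\ldots,m\}$ occurs as some $m-k+1$, this means $x\in Fix(T_{i})$ for all $i$, i.e. $x\in F$.

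This one argument settles both remaining claims. If $x\in Fix(T)$ then $\Vert T(x)-y\Vert=\Vert x-y\Vert$ trivially, so the equality analysis places $x\in F$; combined with the easy inclusion this gives $Fix(T)=F$. To see that $T$ is sQNE, take $x\in D\setminus Fix(T)=D\setminus F$: an equality $\Vert T(x)-y\Vert=\Vert x-y\Vert$ would force $x\in F$, a contradiction, so the chain must be strict somewhere and $\Vert T(x)-y\Vert<\Vert x-y\Vert$. The only delicate point is the bookkeeping of the composition order, so that the strict-inequality hypothesis is applied to the correct intermediate iterate $z_{k-1}$; once the chain is set up with $z_{k}=T_{m-k+1}(z_{k-1})$, the strictness propagates cleanly.
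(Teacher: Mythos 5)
Your proof is correct, and it is worth noting that the paper itself does not prove this proposition at all: it is stated as a rephrased version of Corollary 4.50 in Bauschke and Combettes \cite{BauschkeCombettes} and imported by citation. Your argument is essentially the standard one underlying that corollary, but organized differently: the book proceeds by induction, reducing to the two-operator case (its Proposition 4.49) and composing, whereas you run the equality-propagation directly along all $m$ partial compositions $z_{k}=T_{m-k+1}(z_{k-1})$ at once. The two are mathematically equivalent; your version buys a self-contained, induction-free proof in which the single collapse-of-equalities step simultaneously delivers $Fix(T)\subseteq\cap_{i=1}^{m}Fix(T_{i})$ and the strictness of $T$, at the modest cost of the order bookkeeping you flag yourself (which you handle correctly: $z_{1}=T_{m}(x)$ through $z_{m}=T_{1}(\cdots)=T(x)$ matches $T=T_{1}T_{2}\cdots T_{m}$). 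Two small points, both fine as written but worth making explicit: your appeal to ``every sQNE operator is in particular QNE'' needs $Fix(T_{i})\neq\emptyset$ and the observation that for $x\in Fix(T_{i})$ the QNE inequality holds as an equality, both of which are available here since $\cap_{i=1}^{m}Fix(T_{i})\neq\emptyset$; and your chain only establishes quasi-nonexpansivity of $T$ relative to points of $F$, which suffices because you subsequently prove $Fix(T)=F$, so the strict inequality is verified against exactly the set that the definition of sQNE requires.
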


\begin{prop}
\label{prop-2- Prop4.47}Let $D$ be a nonempty subset of $H$, let
$(T_{i})_{i=1}^{m}$ be a finite family of self QNEs on $D$ such
that $\cap_{i=1}^{m}Fix(T_{i})\neq\emptyset$ and let $(w_{i})_{i=1}^{m}$
be a sequence of strictly positive real numbers such that $\sum_{i=1}^{m}w_{i}=1$.
Then $Fix(\sum_{i=1}^{m}w_{i}T_{i})=\cap_{i=1}^{m}Fix(T_{i})$.
\end{prop}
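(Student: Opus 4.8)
The plan is to establish the two inclusions separately, treating $\cap_{i=1}^{m}Fix(T_{i})\subseteq Fix(\sum_{i=1}^{m}w_{i}T_{i})$ as the routine direction and the reverse as the substantive one. Write $T:=\sum_{i=1}^{m}w_{i}T_{i}$ and, using the hypothesis that the intersection is nonempty, fix once and for all a point $z\in\cap_{i=1}^{m}Fix(T_{i})$. The first inclusion is immediate: if $T_{i}(x)=x$ for every $i$, then since $\sum_{i=1}^{m}w_{i}=1$ we get $T(x)=\sum_{i=1}^{m}w_{i}T_{i}(x)=\sum_{i=1}^{m}w_{i}x=x$, so $x\in Fix(T)$.

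For the reverse inclusion I would rely on the convex-combination identity
\[
\left\Vert \sum_{i=1}^{m}w_{i}a_{i}\right\Vert ^{2}=\sum_{i=1}^{m}w_{i}\Vert a_{i}\Vert^{2}-\sum_{1\leq i<j\leq m}w_{i}w_{j}\Vert a_{i}-a_{j}\Vert^{2},
\]
which holds for arbitrary $a_{1},\ldots,a_{m}\in H$ whenever $\sum_{i=1}^{m}w_{i}=1$ and is verified by expanding the inner products on both sides. Applying it with $a_{i}:=T_{i}(x)-z$, and using $\sum_{i=1}^{m}w_{i}=1$ to identify $\sum_{i=1}^{m}w_{i}a_{i}=T(x)-z$, gives for any $x\in Fix(T)$ (so that $T(x)=x$) the relation
\[
\Vert x-z\Vert^{2}=\sum_{i=1}^{m}w_{i}\Vert T_{i}(x)-z\Vert^{2}-\sum_{1\leq i<j\leq m}w_{i}w_{j}\Vert T_{i}(x)-T_{j}(x)\Vert^{2}.
\]

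Next I would invoke the QNE property of each $T_{i}$ at the common fixed point $z$, namely $\Vert T_{i}(x)-z\Vert\leq\Vert x-z\Vert$, to bound the first sum above by $\Vert x-z\Vert^{2}$, which rearranges to
\[
\sum_{1\leq i<j\leq m}w_{i}w_{j}\Vert T_{i}(x)-T_{j}(x)\Vert^{2}\leq\sum_{i=1}^{m}w_{i}\Vert x-z\Vert^{2}-\Vert x-z\Vert^{2}=0.
\]
Since every weight $w_{i}$ is strictly positive, each summand must vanish, so all the points $T_{i}(x)$ coincide with a common value $v$; then $x=T(x)=\sum_{i=1}^{m}w_{i}v=v$ forces $T_{i}(x)=x$ for every $i$, i.e. $x\in\cap_{i=1}^{m}Fix(T_{i})$.

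The one place where care is needed is the choice of inequality: the plain convexity estimate $\Vert\sum w_{i}a_{i}\Vert^{2}\leq\sum w_{i}\Vert a_{i}\Vert^{2}$ is too weak, since it would only yield $\Vert T_{i}(x)-z\Vert=\Vert x-z\Vert$ and not the collapse of the $T_{i}(x)$ onto a single point. The main obstacle is therefore recognizing that one must retain the exact correction term $\sum_{i<j}w_{i}w_{j}\Vert a_{i}-a_{j}\Vert^{2}$ and combine it with the strict positivity of all the weights; everything else is bookkeeping.
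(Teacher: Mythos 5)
Your proof is correct, and it takes a genuinely different route from the one the paper relies on: the paper states this proposition without proof, as a rephrased quotation of Proposition 4.47 of \cite{BauschkeCombettes}, and the argument it actually reproduces (for the countable analogue, in its Proposition \ref{prop-4- Prop4.47-infinite case}) runs differently. There, writing $Q:=\sum_{i}w_{i}T_{i}$ and fixing $y\in\cap_{i}Fix(T_{i})$, the quasi-nonexpansivity of each $T_{i}$ is first converted into the per-operator estimate $2\left\langle T_{i}(x)-x,\,x-y\right\rangle \leq-\Vert T_{i}(x)-x\Vert^{2}$; evaluating at $z\in Fix(Q)$ and summing with the weights gives $0=2\left\langle Q(z)-z,\,z-y\right\rangle \leq-\sum_{i}w_{i}\Vert T_{i}(z)-z\Vert^{2}$, which forces $T_{i}(z)=z$ for every $i$ in one stroke. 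You instead use the exact strong-convexity identity for $\Vert\cdot\Vert^{2}$ with the pairwise correction term $\sum_{i<j}w_{i}w_{j}\Vert a_{i}-a_{j}\Vert^{2}$, which first collapses the points $T_{i}(x)$ onto a single value $v$ and only then, via $x=T(x)=v$, identifies them with $x$. The two computations are close cousins --- your correction term encodes the same cross terms that the paper's inner-product inequality isolates --- but they buy different things. Your version is self-contained, yields the extra pairwise information $T_{i}(x)=T_{j}(x)$, and your closing remark correctly diagnoses why the plain convexity bound $\Vert\sum w_{i}a_{i}\Vert^{2}\leq\sum w_{i}\Vert a_{i}\Vert^{2}$ is too weak (equality there only gives $\Vert T_{i}(x)-z\Vert=\Vert x-z\Vert$, which QNE alone cannot upgrade to $T_{i}(x)=x$). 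The paper's version, by contrast, is linear in the index --- one inequality per operator, summed once against the weights --- which is exactly what lets it pass painlessly to the countable case needed later in the paper, whereas your route would there require convergence control of the double series $\sum_{i<j}w_{i}w_{j}\Vert T_{i}(x)-T_{j}(x)\Vert^{2}$ on top of the single series already handled in Proposition \ref{prop-4- Prop4.47-infinite case}.
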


The following lemma combines all of the above into a useful auxiliary
tool which is used repeatedly. We consider the operator $T:=\sum_{t\in\Omega}w(t)T[t]$
which is called a \textit{string-averaging operator.}
\begin{lem}
\label{lemma-2 - correct set} Let $D$ be a nonempty closed convex
subset of $H$ and let $(T_{i})_{i=1}^{m}$ be a finite family of
self FNEs on $D$ such that $F:=\cap_{i=1}^{m}Fix(T_{i})\neq\emptyset$.
Let $(\varOmega,w)\in\mathcal{\mathcal{M}}$ and let $T=\sum_{t\in\Omega}w(t)T[t]$.
Then $Fix(T)=\cap_{i=1}^{m}Fix(T_{i})$.
\end{lem}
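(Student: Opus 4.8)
The plan is to prove the two inclusions separately, reducing each to one of the two cited propositions via the links established in Lemma~\ref{lemma-1- link}. The key preparatory observation is that each string operator $T[t]$ is itself an sQNE operator whose fixed point set equals the intersection of the fixed point sets of the operators appearing in $t$. This lets me pass from the individual FNEs to the string-averaging operator $T=\sum_{t\in\Omega}w(t)T[t]$ in a controlled way.

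First I would invoke Lemma~\ref{lemma-1- link} to note that, since each $T_i$ is FNE with $F=\cap_{i=1}^m Fix(T_i)\neq\emptyset$ (so in particular $Fix(T_i)\supseteq F\neq\emptyset$ for every $i$), each $T_i$ is sQNE. Then, fixing an index vector $t=(t_1,\ldots,t_q)\in\Omega$, I would apply Proposition~\ref{prop-1-Corollary 4.50 BC} to the finite subfamily $(T_{t_q},T_{t_{q-1}},\ldots,T_{t_1})$. A small point to check is that the hypothesis of that proposition requires the intersection of the relevant fixed point sets to be nonempty; this holds because $F\subseteq\cap_{\ell=1}^{q}Fix(T_{t_\ell})$ and $F\neq\emptyset$. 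The conclusion is that $T[t]$ is sQNE with $Fix(T[t])=\cap_{\ell=1}^{q}Fix(T_{t_\ell})$.

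Next I would assemble the string-averaging operator. Each $T[t]$ is sQNE, hence in particular QNE, and $F\subseteq Fix(T[t])$ for every $t\in\Omega$, so $F\subseteq\cap_{t\in\Omega}Fix(T[t])$, which is nonempty. Applying Proposition~\ref{prop-2- Prop4.47} to the finite family $(T[t])_{t\in\Omega}$ with weights $w(t)$ (which are strictly positive and sum to $1$ by the definition of $\mathcal{M}$) gives $Fix(T)=Fix\bigl(\sum_{t\in\Omega}w(t)T[t]\bigr)=\cap_{t\in\Omega}Fix(T[t])=\cap_{t\in\Omega}\bigl(\cap_{\ell=1}^{\gamma(t)}Fix(T_{t_\ell})\bigr)$.

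It remains to identify this last intersection with $\cap_{i=1}^m Fix(T_i)=F$. One inclusion is immediate, since every index $t_\ell$ appearing in some $t\in\Omega$ lies in $\{1,\ldots,m\}$, so $F\subseteq\cap_{t\in\Omega}Fix(T[t])$. For the reverse inclusion, the \emph{fitness} of $\Omega$ is exactly what is needed: by Definition~\ref{def:IndexVec-StringOp-FitOmega-FiniteCase}, every index $i\in\{1,\ldots,m\}$ appears in at least one string $t\in\Omega$, so a point in $\cap_{t\in\Omega}Fix(T[t])$ is fixed by every $T_i$. This fitness step is the only place where a genuine verification is required rather than a direct citation, and it is the crux of the argument; the rest is bookkeeping that chains the two propositions together.
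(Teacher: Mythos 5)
Your proof is correct and follows essentially the same route as the paper's: pass from the FNEs to sQNE via Lemma~\ref{lemma-1- link}(iii), identify $Fix(T[t])=\cap_{\ell=1}^{q}Fix(T_{t_\ell})$ by Proposition~\ref{prop-1-Corollary 4.50 BC}, combine the strings with Proposition~\ref{prop-2- Prop4.47}, and finish with the fitness of $\Omega$. The only (harmless) cosmetic difference is that you obtain quasi-nonexpansivity of each $T[t]$ directly from its strict quasi-nonexpansivity supplied by Proposition~\ref{prop-1-Corollary 4.50 BC}, whereas the paper first shows $T[t]$ is NE via the composition lemma \cite[Lemma 2.1.12(ii)]{Cegielski} and then uses the fact that an NE operator with a fixed point is QNE.
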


\begin{proof}
Note that $F\neq\emptyset$ and (\ref{eq:string}) imply that for
every $t\in\varOmega$ and for every $x\in F$, $T[t](x)=x$ (since
$T[t]$ is a composition). Hence,
\begin{equation}
F\subseteq Fix(T[t]).\label{eq:Lemma2-1}
\end{equation}
Therefore,
\begin{equation}
F\subseteq\cap_{t\in\Omega}Fix(T[t]).\label{eq:Lemma2-2}
\end{equation}
We want to use Proposition \ref{prop-2- Prop4.47} for the family
$(T[t])_{t\in\varOmega}$. Since Every FNE is NE (see Lemma \ref{lemma-1- link}(i)),
for every $t\in\varOmega$ we apply \cite[Lemma 2.1.12(ii)]{Cegielski}
to the family $(T_{t_{\ell}})_{\ell=1}^{q}$ and conclude that the
string operator $T[t]=T_{t_{q}}T_{t_{q-1}}\cdots T_{t_{1}}$ is NE.
Thus, from (\ref{eq:Lemma2-1}) and due to the fact that every NE
with a fixed point is QNE, the family of operators $(T[t])_{t\in\varOmega}$
is a finite family of QNEs. This, together with (\ref{eq:Lemma2-2}),
yields, according to Proposition \ref{prop-2- Prop4.47}, that
\begin{equation}
Fix(T)=\cap_{t\in\varOmega}Fix(T[t]).\label{eq:15}
\end{equation}
 $F\neq\emptyset$ implies that $Fix(T_{i})\neq\emptyset$, thus,
by Lemma \ref{lemma-1- link}(iii), $(T_{i})_{i=1}^{m}$ is a family
of sQNEs and, so, for every $t\in\varOmega$, applying Proposition
\ref{prop-1-Corollary 4.50 BC} with the family $(T_{t_{\ell}})_{\ell=1}^{q}$
yields that the string operator $T[t]$ satisfies 
\begin{equation}
Fix(T[t])=\cap_{\ell=1}^{q}Fix(T_{t_{\ell}}).\label{eq:Lemma2-3}
\end{equation}
From the fitness of $\varOmega$ and from (\ref{eq:Lemma2-3}), we
get
\begin{equation}
\cap_{t\in\varOmega}Fix(T[t])=\cap_{t\in\varOmega}(\cap_{\ell=1}^{q}Fix(T_{t_{\ell}}))=\cap_{i=1}^{m}Fix(T_{i}),
\end{equation}
and, overall, from (\ref{eq:15})
\begin{equation}
Fix(T)=\cap_{i=1}^{m}Fix(T_{i}).
\end{equation}
\end{proof}
We are now ready to propose the new string-averaging methods for solving
the best approximation problem and prove their convergence.

\subsection{The static string-averaging method for the finite case \label{subsec:static-SA-finite}}

First we discuss string-averaging methods in which a single pair $(\varOmega,w)\in\mathcal{\mathcal{\mathcal{M}}}$
is picked at the outset and kept fixed throughout the iterative process.
Such string-averaging methods will be termed ``static string-averaging
methods''. We will make use of the convergence theorem in \cite{Halpern}.
Halpern's algorithm is a sequential algorithm which generates a sequence
via the iterative process 
\begin{equation}
x^{k+1}=\lambda_{k}u+(1-\lambda_{k})S(x^{k}),\label{eq: Halpern}
\end{equation}
where $S:D\rightarrow D$ is NE with $Fix(S)\neq\emptyset,$ the anchor
point $u\in D$ is given and fixed, the initialization $x^{0}\in D$
is an arbitrary point, and the sequence $(\lambda_{k})_{k\in\mathbb{N}}$
is a steering sequence as in Definition \ref{def:Steering-Seq}. Its
proof of convergence can be found also in \cite[Theorem 30.1]{BauschkeCombettes}.
We present a slightly rephrased version of this theorem without a
proof.
\begin{thm}
\label{thm-1-Halpern} Let $D$ be a nonempty closed convex subset
of $H$ and let $S$ be a self NE on $D$ such that $Fix(S)\neq\emptyset$.
Let $(\lambda_{k})_{k\in\mathbb{N}}$ be a steering sequence and let
$u,x^{0}\in D$. Then any sequence generated by (\ref{eq: Halpern})
converges strongly to $P_{Fix(S)}(u)$.
\end{thm}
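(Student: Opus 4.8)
The plan is to prove this by invoking the standard convergence theory for Halpern's iteration, since the statement is precisely Halpern's theorem under the steering-sequence hypotheses collected in Definition \ref{def:Steering-Seq}. Because the excerpt explicitly says this theorem is presented without proof and directs the reader to \cite{Halpern} and \cite[Theorem 30.1]{BauschkeCombettes}, the honest ``proof'' here is a citation rather than an original argument. Nonetheless, to sketch how the argument runs, I would first record the preliminary facts that make the conclusion well-posed: since $S$ is NE with $Fix(S)\neq\emptyset$, the set $Fix(S)$ is closed and convex (as noted in the excerpt after Lemma \ref{lemma-1- link}), so the projection $P_{Fix(S)}(u)$ exists and is unique, and the iterates $(x^{k})$ stay in $D$ because $D$ is convex and $\lambda_{k}u+(1-\lambda_{k})S(x^{k})$ is a convex combination of points of $D$.

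The core of the argument proceeds in three standard stages. First I would establish boundedness of $(x^{k})$: fixing any $z\in Fix(S)$ and using nonexpansivity $\Vert S(x^{k})-z\Vert\le\Vert x^{k}-z\Vert$ together with the convex-combination structure of (\ref{eq: Halpern}), an induction gives $\Vert x^{k}-z\Vert\le\max\{\Vert u-z\Vert,\Vert x^{0}-z\Vert\}$, so the sequence is bounded. Second I would control the asymptotic regularity, showing $\Vert x^{k+1}-x^{k}\Vert\to0$; this is exactly where the three steering conditions are consumed. Writing the difference $x^{k+1}-x^{k}$ in terms of $\lambda_{k}-\lambda_{k-1}$, $S(x^{k})-S(x^{k-1})$, and $u-S(x^{k-1})$, one bounds $\Vert x^{k+1}-x^{k}\Vert\le(1-\lambda_{k})\Vert x^{k}-x^{k-1}\Vert+|\lambda_{k}-\lambda_{k-1}|\,M$ for a constant $M$; the summability condition (\ref{eq:SteeringSeq-third condition}) together with the divergence condition (\ref{eq:SteeringSeq-second condition}) then forces $\Vert x^{k+1}-x^{k}\Vert\to0$ via a standard numerical-sequence lemma, after which $\Vert x^{k}-S(x^{k})\Vert\to0$ follows since $\Vert x^{k+1}-S(x^{k})\Vert=\lambda_{k}\Vert u-S(x^{k})\Vert\to0$ by $\lambda_{k}\to0$.

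The third and decisive stage is the strong-convergence argument itself. Setting $p=P_{Fix(S)}(u)$, the key tool is the variational characterization of the projection, namely $\langle u-p,\,y-p\rangle\le0$ for all $y\in Fix(S)$. One first shows $\limsup_{k\to\infty}\langle u-p,\,x^{k}-p\rangle\le0$ by extracting a weakly convergent subsequence, using demiclosedness of $I-S$ (which holds for nonexpansive $S$ in Hilbert space) together with $\Vert x^{k}-S(x^{k})\Vert\to0$ to conclude that the weak limit lies in $Fix(S)$, and then invoking the projection inequality. Finally, expanding $\Vert x^{k+1}-p\Vert^{2}$ from (\ref{eq: Halpern}) yields a recursion of the form $\Vert x^{k+1}-p\Vert^{2}\le(1-\lambda_{k})\Vert x^{k}-p\Vert^{2}+\lambda_{k}\,\sigma_{k}$ with $\limsup_{k}\sigma_{k}\le0$; by the divergence condition $\sum\lambda_{k}=\infty$ (equivalently $\prod(1-\lambda_{k})=0$), a standard Xu-type lemma gives $\Vert x^{k}-p\Vert\to0$, i.e. strong convergence to $P_{Fix(S)}(u)$.

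The main obstacle, and the reason the authors quote rather than reprove this, is the third stage: the combined use of demiclosedness to identify subsequential weak limits as fixed points and the delicate bookkeeping in the quadratic recursion that converts $\limsup\langle u-p,x^{k}-p\rangle\le0$ into strong convergence. Each steering condition plays a distinct and essential role there, and getting the constants in the recursion to cooperate with $\sum\lambda_{k}=\infty$ is precisely the technical heart of Halpern's theorem; since this is established in the cited references, here it is legitimate to state the result as a black box to be used in the sequel.
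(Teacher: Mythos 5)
Your proposal is correct and takes essentially the same approach as the paper: the paper deliberately states Theorem \ref{thm-1-Halpern} \emph{without proof}, pointing to \cite{Halpern} and \cite[Theorem 30.1]{BauschkeCombettes}, which is exactly the citation you identify as the honest argument. Your supplementary three-stage sketch (boundedness by induction, asymptotic regularity consuming (\ref{eq:SteeringSeq-second condition}) and (\ref{eq:SteeringSeq-third condition}), then demiclosedness of $I-S$ plus the projection variational inequality and an Xu-type recursion driven by (\ref{eq:SteeringSeq-second condition})) is a faithful outline of the standard proof in those references, so nothing is missing.
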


Our new static string-averaging algorithm for a finite family of FNEs
is as follows.\medskip{}
\textbf{}\\
\textbf{Algorithm 1. The static string-averaging algorithm for solving
the best approximation problem in the finite case.}\\
\textbf{Initialization:} Choose a single pair $(\varOmega,w)\in\mathcal{M}$
and an arbitrary $x^{0}\in D.$\textbf{}\\
\textbf{Iterative step:} Given the current iterate $x^{k},$ calculate
the next iterate $x^{k+1}$ by
\begin{equation}
x^{k+1}=\lambda_{k}u+(1-\lambda_{k})\sum_{t\in\Omega}w(t)T\left[t\right]\left(x^{k}\right),
\end{equation}
where $(\lambda_{k})_{k\in\mathbb{N}}$ is a steering sequence, $u$
is the given anchor point and $w(t)$ and $T[t]$ are as in Definition
\ref{def:IndexVec-StringOp-FitOmega-FiniteCase}.\medskip{}

The convergence proof of Algorithm 1 follows.
\begin{thm}
\label{thm-2-FiniteCase-SA}Let $D$ be a nonempty closed convex subset
of $H$ and let $(T_{i})_{i=1}^{m}$ be a finite family of self FNEs
on $D$ such that $F:=\cap_{i=1}^{m}Fix(T_{i})\neq\emptyset$. Let
$(\varOmega,w)\in\mathcal{\mathcal{\mathcal{M}}}$ be fixed, let $(\lambda_{k})_{k\in\mathbb{N}}$
be a steering sequence and let $u,x^{0}\in D$. Then any sequence
$(x^{k})_{k\in\mathbb{N}}$, generated by Algorithm 1, converges strongly
to $P_{F}(u)$.
\end{thm}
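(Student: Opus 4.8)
The plan is to recognize that Algorithm 1 is nothing but Halpern's iteration \eqref{eq: Halpern} driven by the string-averaging operator $S:=\sum_{t\in\Omega}w(t)T[t]$, and then to verify that $S$ satisfies the hypotheses of Theorem \ref{thm-1-Halpern}. Once this is done, Theorem \ref{thm-1-Halpern} delivers strong convergence of $(x^{k})_{k\in\mathbb{N}}$ to $P_{Fix(S)}(u)$, and it only remains to identify $Fix(S)$ with $F$.

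First I would check that $S$ is a self-mapping on $D$. Each $T_{i}$ maps $D$ into $D$, so every string operator $T[t]=T_{t_{q}}\cdots T_{t_{1}}$, being a composition of such maps, also maps $D$ into $D$. Because $D$ is convex and the weights $w(t)$ are positive and sum to $1$, the convex combination $\sum_{t\in\Omega}w(t)T[t](x)$ lies in $D$ for every $x\in D$; hence $S:D\rightarrow D$.

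Next I would show that $S$ is nonexpansive. By Lemma \ref{lemma-1- link}(i) each $T_{i}$ is NE, and a composition of NE operators is NE (as already used in the proof of Lemma \ref{lemma-2 - correct set} via \cite[Lemma 2.1.12(ii)]{Cegielski}), so each $T[t]$ is NE. A convex combination of NE operators is NE: for $x,y\in D$, the triangle inequality gives
\[
\Vert S(x)-S(y)\Vert\leq\sum_{t\in\Omega}w(t)\,\Vert T[t](x)-T[t](y)\Vert\leq\sum_{t\in\Omega}w(t)\,\Vert x-y\Vert=\Vert x-y\Vert.
\]
Thus $S$ is a self NE on $D$.

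Finally, Lemma \ref{lemma-2 - correct set} gives $Fix(S)=\cap_{i=1}^{m}Fix(T_{i})=F$, which in particular is nonempty by assumption. All hypotheses of Theorem \ref{thm-1-Halpern} are therefore met with this choice of $S$, and since the iterative step of Algorithm 1 coincides with \eqref{eq: Halpern}, any sequence $(x^{k})_{k\in\mathbb{N}}$ it generates converges strongly to $P_{Fix(S)}(u)=P_{F}(u)$, as claimed. The proof requires no genuinely hard step: the entire content is the identification of the fixed-point set supplied by Lemma \ref{lemma-2 - correct set} together with the reduction to Halpern's theorem; the only point demanding a little care is confirming that $S$ inherits both nonexpansiveness and the self-mapping property from the individual operators, both of which follow from convexity of $D$ and the triangle inequality.
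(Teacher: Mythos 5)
Your proposal is correct and follows essentially the same route as the paper's own proof: form the string-averaging operator $T=\sum_{t\in\Omega}w(t)T[t]$, verify it is a self NE on $D$ with nonempty fixed point set, apply Halpern's Theorem \ref{thm-1-Halpern}, and identify $Fix(T)=F$ via Lemma \ref{lemma-2 - correct set}. Your explicit check that $T$ maps $D$ into $D$ (via convexity of $D$) is a small point the paper leaves implicit, but it does not change the argument.
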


\begin{proof}
For $(T_{i})_{i=1}^{m}$ and $(\varOmega,w)\in\mathcal{\mathcal{M}}$
consider the family of operators $(T[t])_{t\in\varOmega}$ and define
the string-averaging operator $T:=\sum_{t\in\varOmega}w(t)T\left[t\right]$.
We show first that the operator $T$ is NE and that $Fix(T)=\cap_{i=1}^{m}Fix(T_{i})$.
From the proof of Lemma \ref{lemma-1- link} we conclude that $T$
is a convex combination of NEs and, thus, since a convex combination
of NEs is a NE (see, e.g., \cite[Lemma 2.1.12(i)]{Cegielski}), $T$
is NE. Moreover, $Fix(T)$ is not empty since it contains $F.$ Applying
Halpern's Theorem \ref{thm-1-Halpern} with $T$ in the role of $S$,
any sequence $(x^{k})_{k\in\mathbb{N}}$, generated by Algorithm 1,
converges strongly to $P_{Fix(T)}(u)$. Now, applying Lemma \ref{lemma-2 - correct set}
on $(T_{i})_{i=1}^{m}$ together with $(\varOmega,w)$, results in
\begin{equation}
Fix(T)=\cap_{i=1}^{m}Fix(T_{i}),\label{eq: Theorem-3-1}
\end{equation}
and, therefore, 
\begin{equation}
x^{k}\rightarrow P_{F}(u).
\end{equation}
\end{proof}
This concludes our treatment of the static string-averaging algorithm
for solving the best approximation problem in the finite case. We
make the following remark about it.
\begin{rem}
\label{Remark-1}Theorem \ref{thm-2-FiniteCase-SA} is related to
two important results that appear in \cite{BauschkeCombettes}. It
generalizes Corollaries 30.2 and 30.3 in that book, from the algorithmic
structural point of view, because the algorithms there are \textit{fully-simultaneous}
and \textit{fully-sequential}, respectively. These two algorithmic
options are special cases of the static string-averaging algorithm
that are obtained by choosing either strings of length one with every
index $i=1,2,\ldots,m$ appearing exactly in one string or by choosing
to use a single string that includes all indices $i=1,2,\ldots,m,$
respectively. However, our Theorem \ref{thm-2-FiniteCase-SA} cannot
be considered as a generalization of those corollaries because the
corollaries deal with NEs while we restrict our analysis of the string-averaging
algorithmic structure to only FNEs. The question whether or not our
Theorem \ref{thm-2-FiniteCase-SA} can be proven for NEs remains open.
\end{rem}

Next we expand our results to a non-static case.

\subsection{The quasi-dynamic string-averaging method for the finite case\label{subsec:The-quasi-dynamic}}

The key adjustment which we did in the construction of Algorithm 1
in order to prove its convergence with the aid of Halpern's algorithm,
was the repeated use of the same single fixed pair $(\varOmega,w)$
in all iterations $k\geq1$. This reuse of a fixed $(\varOmega,w)$
throughout the whole iterative process of Algorithm 1 is a special
case of a more general method, mentioned briefly already in Section
\ref{sec:Previous-related-works}, called the \textit{dynamic string-averaging
method}. The dynamic string-averaging algorithmic scheme allows to
pick and use in every step of the iterative process any pair $(\varOmega,w)$
from a pre-defined set $\mathcal{M}_{*}\subset\mathcal{M}$. The set
$\mathcal{M}_{*}$ was defined in \cite[Equation (21)]{CensorZaslavski1}
as follows.
\begin{defn}
{\large{}\label{def: M*}}Fix a number $\Delta\in(0,\,1/m)$ and an
integer $\bar{q}\geq m$ and denote by $\mathcal{M}_{*}\equiv\mathcal{M}_{*}(\Delta,\bar{q})$
the set of all $(\varOmega,w)\in\mathcal{M}$ such that the lengths
of the strings are bounded and the weights are bounded away from zero,
namely,
\begin{equation}
\mathcal{M}_{*}:=\,\left\{ (\varOmega,w)\in\mathcal{M}\,|\,\gamma(t)\leq\bar{q}\:\text{and}\,\Delta\leq w(t),\,\text{for}\,\text{all}\,t\in\varOmega\right\} .\label{eq:M*}
\end{equation}
\end{defn}

The set $\mathcal{M}_{*}$ is an infinite subset of $\mathcal{M}$
but in our quasi-dynamic string-averaging method, proposed below,
we must confine ourselves to a finite subset of $\mathcal{\mathcal{M}}.$
So, instead of choosing the pairs $(\varOmega,w)$ from $\mathcal{M}_{*}$
we choose them from a finite-cardinality subset, denoted by $\mathcal{M}'$$\subset\mathcal{\mathcal{\mathcal{M}}}$,
and use them in a cyclic manner. The finiteness of $\mathcal{M}'$
guarantees that it is actually a subset of $\mathcal{M}_{*}$. As
one can tell, such an algorithm is indeed not as ``dynamic'' as
Algorithm 6 of \cite{CensorZaslavski1} or Algorithm 3 of \cite{CensorZaslavski2}
but, nevertheless, it is not static as Algorithm 1. Therefore, we
name it \textit{quasi-dynamic string-averaging method}.

Next, we explain how the construction of our quasi-dynamic string-averaging
algorithm is done. Let us construct a sequence that is an ordered
version of $\mathcal{M}'$ as follows: Let $\sigma:\mathcal{M}'\rightarrow\left\{ 1,2,\ldots\,,\left|\mathcal{M}'\right|\right\} $
be a one-to-one correspondence and denote, for every $r\in\left\{ 1,2,\ldots\,,\left|\mathcal{M}'\right|\right\} $,
$S_{r}:=(\varOmega_{r},w_{r})$. Let $\left(S_{r}\right)_{r=1}^{\left|\mathcal{M}'\right|}$
be a sequence of all $(\varOmega,w)\in\mathcal{M}'$, sorted by $\sigma((\varOmega,w))$.
Namely, $S_{1}$ is the pair $(\varOmega,w)$ such that $\sigma((\varOmega,w))=1$,
$S_{2}$ is the pair $(\varOmega,w)$ such that $\sigma((\varOmega,w))=2$
and so forth until $S_{\left|\mathcal{M}'\right|}$ is the pair $(\varOmega,w)$
such that $\sigma((\varOmega,w))=\left|\mathcal{M}'\right|$.

The following is our quasi-dynamic string-averaging algorithm.\medskip{}

\begin{flushleft}
\textbf{Algorithm 2. The quasi-dynamic string-averaging algorithm
for solving the best approximation problem in the finite case.}\\
\textbf{Initialization:} $x^{0}\in D$\textbf{ }is arbitrary.\textbf{}\\
\textbf{Iterative step:} Given the current iterate $x^{k},$ calculate
the next iterate $x^{k+1}$ by
\begin{equation}
x^{k+1}=\lambda_{k}u+(1-\lambda_{k})\sum_{t\in\Omega_{j(k)}}w_{j(k)}(t)T[t](x^{k}),\label{eq: Algorithm 2 scheme}
\end{equation}
where $(\lambda_{k})_{k\in\mathbb{N}}$ is a steering sequence, $u$
is the given anchor point, $j(k)=k\,mod\left|\mathcal{M}'\right|+1$
for all $k\geq0,$ is a cyclic control sequence (see, e.g., \cite[Definition 4]{SegalCensor})
and $\varOmega_{j(k)}$ and $w_{j(k)}$ are the elements of the pair
$S_{j(k)}=(\varOmega_{j(k)},w_{j(k)})$, respectively.\medskip{}
\par\end{flushleft}

To prove the convergence of Algorithm 2 we make use of the following
theorem, which is a slightly rephrased version of Theorem 3.1 in \cite{HLWB}.
\begin{thm}
\label{thm-3-HLWB}Let $D$ be a nonempty closed convex subset of
$H$ and let $(T_{i})_{i=1}^{m}$ be a finite family of self NEs on
$D$ such that $F:=\cap_{i=1}^{m}Fix(T_{i})\neq\emptyset.$ Assume
that
\begin{equation}
\begin{aligned}F & =Fix(T_{m}T_{m-1}\cdots T_{1})=Fix(T_{1}T_{m}\cdots T_{3}T_{2})\\
 & =\ldots=Fix(T_{m-1}T_{m-2}\cdots T_{1}T_{m}).
\end{aligned}
\label{eq: Theorem-4-1}
\end{equation}
Let $i(k)=k\,mod\,m+1$ be a cyclic control sequence, let $(\lambda_{k})_{k\in\mathbb{N}}$
be a steering sequence and let $u,x^{0}\in D$. Then any sequence
$(x^{k})_{k\in\mathbb{N}}$ generated by 
\begin{equation}
x^{k+1}=\lambda_{k}u+(1-\lambda_{k})T_{i(k)}(x^{k}),\label{eq:Theorem-4-2}
\end{equation}

converges strongly to $P_{F}(u)$.
\end{thm}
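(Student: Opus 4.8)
The plan is to reproduce the classical Halpern--Lions--Wittman--Bauschke argument, whose four ingredients are boundedness, asymptotic regularity over one full cycle, a limit-superior (variational) inequality obtained from a demiclosedness argument, and a final contraction-type recursion. Throughout write $p:=P_{F}(u)$ and note that, since $p\in F\subseteq Fix(T_{i})$ for every $i$, each $T_{i}$ is quasi-nonexpansive relative to $p$, so $\|T_{i}x-p\|\le\|x-p\|$. First I would prove boundedness: by induction on $k$, using the convexity of the update (\ref{eq:Theorem-4-2}) together with $\|T_{i(k)}x^{k}-p\|\le\|x^{k}-p\|$, one gets $\|x^{k}-p\|\le\max\{\|x^{0}-p\|,\|u-p\|\}$ for all $k$. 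Hence $(x^{k})$ and $(T_{i(k)}x^{k})$ are bounded, and $\|x^{k+1}-T_{i(k)}x^{k}\|=\lambda_{k}\|u-T_{i(k)}x^{k}\|\to 0$ because $\lambda_{k}\to 0$.

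Next I would establish asymptotic regularity. Because the control is cyclic with period $m$, steps $k$ and $k+m$ use the same operator $T_{i(k)}=T_{i(k+m)}$; subtracting the two instances of (\ref{eq:Theorem-4-2}) and invoking nonexpansiveness gives $\|x^{k+m+1}-x^{k+1}\|\le(1-\lambda_{k+m})\|x^{k+m}-x^{k}\|+M|\lambda_{k+m}-\lambda_{k}|$ for a bound $M$ on $\|u-T_{i(k)}x^{k}\|$. Since $|\lambda_{k+m}-\lambda_{k}|\le\sum_{j=0}^{m-1}|\lambda_{k+j+1}-\lambda_{k+j}|$ is summable by (\ref{eq:SteeringSeq-third condition}) while $\sum_{k}\lambda_{k+m}=+\infty$ by (\ref{eq:SteeringSeq-second condition}), a standard convergence lemma for nonnegative real sequences yields $\|x^{k+m}-x^{k}\|\to 0$. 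Chaining this with $\|x^{k+1}-T_{i(k)}x^{k}\|\to 0$ across the $m$ steps of one cycle (the $T_{i}$ being nonexpansive so the finitely many accumulated errors still tend to $0$) shows $\|x^{k}-U_{k}x^{k}\|\to 0$, where $U_{k}:=T_{i(k+m-1)}\cdots T_{i(k+1)}T_{i(k)}$ is the full-cycle product. By periodicity $U_{k}$ takes only $m$ distinct values, one per residue class of $k$ modulo $m$, and each is exactly one of the cyclic compositions appearing in hypothesis (\ref{eq: Theorem-4-1}).

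The step I expect to be the main obstacle is establishing $\limsup_{k\to\infty}\langle u-p,\,x^{k+1}-p\rangle\le 0$. I would take a subsequence attaining the $\limsup$, restrict it by the pigeonhole principle to a single residue class $r$ so that $U_{k}\equiv U^{(r)}$ along it, and extract a weakly convergent sub-subsequence $x^{k_{j}}\rightharpoonup\bar{x}$ (possible since the sequence is bounded). From $\|x^{k_{j}}-U^{(r)}x^{k_{j}}\|\to 0$ and the classical demiclosedness principle for the nonexpansive operator $U^{(r)}$, the weak limit satisfies $\bar{x}\in Fix(U^{(r)})$. Here hypothesis (\ref{eq: Theorem-4-1}) is indispensable: it forces $Fix(U^{(r)})=F$, so $\bar{x}\in F$, and the variational characterization of the metric projection $p=P_{F}(u)$ gives $\langle u-p,\bar{x}-p\rangle\le 0$, whence the claimed inequality.

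Finally, squaring the update and using $\|a+b\|^{2}\le\|a\|^{2}+2\langle b,\,a+b\rangle$ together with the quasi-nonexpansiveness bound yields the recursion $\|x^{k+1}-p\|^{2}\le(1-\lambda_{k})\|x^{k}-p\|^{2}+2\lambda_{k}\langle u-p,\,x^{k+1}-p\rangle$. Setting $a_{k}=\|x^{k}-p\|^{2}$ and $b_{k}=2\langle u-p,\,x^{k+1}-p\rangle$, we have $a_{k+1}\le(1-\lambda_{k})a_{k}+\lambda_{k}b_{k}$ with $\sum_{k}\lambda_{k}=+\infty$ and $\limsup_{k}b_{k}\le 0$ from the previous step; the standard sequence lemma in its $\limsup$ form then forces $a_{k}\to 0$, that is $x^{k}\to p=P_{F}(u)$ strongly, as required.
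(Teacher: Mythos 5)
Your proposal is correct and takes essentially the same route as the paper, which states Theorem \ref{thm-3-HLWB} without proof as a slight rephrasing of Theorem 3.1 in \cite{HLWB}: your four ingredients (boundedness, cycle-wise asymptotic regularity via summability of $|\lambda_{k+m}-\lambda_{k}|$, the demiclosedness argument combined with hypothesis (\ref{eq: Theorem-4-1}) to obtain $\limsup_{k}\left\langle u-p,x^{k+1}-p\right\rangle \leq0$, and the standard recursion lemma) reproduce Bauschke's original proof faithfully. Your observation that the paper's condition (\ref{eq:SteeringSeq-third condition}) implies Bauschke's weaker assumption $\sum_{k}|\lambda_{k+m}-\lambda_{k}|<\infty$ is exactly the needed bridge, and the only cosmetic point to fix in a full write-up is the index shift in the $\limsup$ step (extract the weakly convergent subsequence at the indices $k_{j}+1$, or use that $\limsup_{k}\left\langle u-p,x^{k+1}-p\right\rangle =\limsup_{k}\left\langle u-p,x^{k}-p\right\rangle $).
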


With the aid of the sequence $(S_{r})_{r=1}^{\left|\mathcal{M}'\right|}$
and Theorem \ref{thm-3-HLWB} we present in the next theorem a proof
of convergence of Algorithm 2.
\begin{thm}
\label{thm-4- QDSA-HLWB}Let $D$ be a nonempty closed convex subset
of H and let $(T_{i})_{i=1}^{m}$ be a finite family of self FNEs
on $D$ such that $F:=\cap_{i=1}^{m}Fix(T_{i})\neq\emptyset$. Let
$\mathcal{M}'$ be a finite subset of $\mathcal{\mathcal{\mathcal{M}}},$
let $j(k)=k\,mod\left|\mathcal{M}'\right|+1$ , for all $k\geq0,$
be a cyclic control sequence and let $(\lambda_{k})_{k\in\mathbb{N}}$
be a steering sequence. Let $u,x^{0}\in D.$ Then any sequence $(x^{k})_{k\in\mathbb{N}}$,
generated by Algorithm 2, converges strongly to $P_{F}(u)$.
\end{thm}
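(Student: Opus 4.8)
The plan is to recognize Algorithm 2 as a Halpern--Lions--Wittman--Bauschke iteration of the form (\ref{eq:Theorem-4-2}), driven not by the original operators $T_{i}$ but by the finite family of string-averaging operators generated by $\mathcal{M}'$. Writing $M:=\left|\mathcal{M}'\right|$ and, for each $r\in\{1,\dots,M\}$, $S_{r}:=\sum_{t\in\varOmega_{r}}w_{r}(t)\,T[t]$, the recursion (\ref{eq: Algorithm 2 scheme}) reads $x^{k+1}=\lambda_{k}u+(1-\lambda_{k})S_{j(k)}(x^{k})$, which is precisely (\ref{eq:Theorem-4-2}) for the family $(S_{r})_{r=1}^{M}$ with the cyclic control $j(k)$. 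It therefore suffices to verify that $(S_{r})_{r=1}^{M}$ meets the hypotheses of Theorem \ref{thm-3-HLWB} and that the associated common fixed point set is $F$.

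First I would record the elementary properties of each $S_{r}$. Each string operator $T[t]$ is a composition of NE operators (Lemma \ref{lemma-1- link}(i)), hence NE, so $S_{r}$, being a convex combination of NE operators, is NE. Applying Lemma \ref{lemma-2 - correct set} to the pair $(\varOmega_{r},w_{r})\in\mathcal{M}$ gives $Fix(S_{r})=\cap_{i=1}^{m}Fix(T_{i})=F$ for every $r$, and hence $\cap_{r=1}^{M}Fix(S_{r})=F\neq\emptyset$. This secures the nonexpansiveness and nonemptiness requirements of Theorem \ref{thm-3-HLWB} and identifies the projection target as $P_{F}(u)$.

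The substantive step is to verify the cyclic product hypothesis (\ref{eq: Theorem-4-1}) for $(S_{r})_{r=1}^{M}$, namely that each of the $M$ cyclic compositions of $S_{1},\dots,S_{M}$ occurring there has fixed point set exactly $F$. My route is through strict quasi-nonexpansivity. Since $F\neq\emptyset$, each $T_{i}$ is sQNE by Lemma \ref{lemma-1- link}(iii), so by Proposition \ref{prop-1-Corollary 4.50 BC} each string operator $T[t]=T_{t_{q}}\cdots T_{t_{1}}$ is sQNE. I then need that a convex combination $S_{r}=\sum_{t}w_{r}(t)T[t]$ of sQNE operators sharing the common fixed point set $F$ is itself sQNE; this follows from a short estimate: for $x\notin Fix(S_{r})$ one has $x\notin Fix(T[t_{0}])$ for some $t_{0}$, because $Fix(S_{r})=\cap_{t}Fix(T[t])$ by Proposition \ref{prop-2- Prop4.47}, and then for any $y\in F$ the bound $\Vert S_{r}(x)-y\Vert\leq\sum_{t}w_{r}(t)\Vert T[t](x)-y\Vert$ combines a strictly decreasing $t_{0}$-term (sQNE) with nonincreasing remaining terms (QNE) to give $\Vert S_{r}(x)-y\Vert<\Vert x-y\Vert$. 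With each $S_{r}$ now sQNE and $\cap_{r=1}^{M}Fix(S_{r})=F\neq\emptyset$, Proposition \ref{prop-1-Corollary 4.50 BC} applied to each cyclic reordering of the family $(S_{r})_{r=1}^{M}$ shows that every cyclic composition has fixed point set equal to $\cap_{r=1}^{M}Fix(S_{r})=F$, which is exactly (\ref{eq: Theorem-4-1}).

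With all hypotheses in place, I would invoke Theorem \ref{thm-3-HLWB} with $(S_{r})_{r=1}^{M}$ playing the role of $(T_{i})_{i=1}^{m}$ and $j(k)$ playing the role of the cyclic control, concluding that $(x^{k})_{k\in\mathbb{N}}$ converges strongly to $P_{F}(u)$. I expect the main obstacle to be the sQNE step for the operators $S_{r}$: that a convex combination of sQNE operators with a common fixed point is again sQNE is the one ingredient not handed over verbatim by the quoted propositions. A secondary point requiring care is the index bookkeeping, confirming that the cyclic control of Algorithm 2 indeed runs through the cyclic orderings of $S_{1},\dots,S_{M}$ in the pattern demanded by (\ref{eq: Theorem-4-1}), so that Proposition \ref{prop-1-Corollary 4.50 BC} yields the fixed point set $F$ for each shift.
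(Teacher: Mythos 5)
Your proposal is correct and follows essentially the same route as the paper: both reduce Algorithm 2 to the HLWB iteration of Theorem \ref{thm-3-HLWB} applied to the family of string-averaging operators $T_{S_{r}}=\sum_{t\in\varOmega_{r}}w_{r}(t)T[t]$, establish nonexpansivity and strict quasi-nonexpansivity of each $T_{S_{r}}$, obtain the cyclic-composition condition (\ref{eq: Theorem-4-1}) from Proposition \ref{prop-1-Corollary 4.50 BC}, and identify the limit as $P_{F}(u)$ via fitness. The only (harmless) deviation is your sQNE step for the convex combination, which you prove by a direct estimate isolating a strictly contracted $t_{0}$-term, whereas the paper invokes the $C$-strict quasi-nonexpansivity machinery of \cite[Theorem 2.1.26(i)]{Cegielski}; your inline argument is a valid elementary substitute.
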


\begin{proof}
Define the finite family of operators $(T_{S_{r}})_{r=1}^{\left|\mathcal{M}'\right|}$
by $T_{S_{r}}:=\sum_{t\in\varOmega_{r}}w_{r}(t)T[t]$ where for every
$r\in\left\{ 1,2,\ldots\,,\left|\mathcal{M}'\right|\right\} $, $S_{r}:=(\varOmega_{r},w_{r})$.
We first show that for every $r\in\left\{ 1,2,\ldots\,,\left|\mathcal{M}'\right|\right\} $
the operator $T_{S_{r}}$ is both NE and sQNE. By similar arguments
to those made for $T$ in the proof of Theorem \ref{thm-2-FiniteCase-SA},
it follows that $(T_{S_{r}})_{r=1}^{\left|\mathcal{M}'\right|}$ is
a family of NEs. From the proof of Lemma \ref{lemma-2 - correct set}
we deduce that $(T[t])_{t\in\varOmega_{r}}$ is a family of QNEs such
that
\begin{equation}
\emptyset\neq F\subseteq\cap_{t\in\varOmega_{r}}Fix(T[t]),\label{eq:Theorem-5-1-1}
\end{equation}
 for every $r\in\left\{ 1,2,\ldots\,,\left|\mathcal{M}'\right|\right\} $.
Therefore, we are able to apply Proposition \ref{prop-2- Prop4.47}
to the family $(T[t])_{t\in\varOmega_{r}}$ (in place of the family
$(T_{i})_{i=1}^{m}$) for every $r\in\left\{ 1,2,\ldots\,,\left|\mathcal{M}'\right|\right\} $,
which results in
\begin{equation}
Fix(T_{S_{r}})=\cap_{t\in\varOmega_{r}}Fix(T[t]).\label{eq:Theorem-5-1}
\end{equation}
Therefore, by (\ref{eq:Theorem-5-1-1}),
\begin{equation}
Fix(T_{S_{r}})\neq\emptyset.\label{eq:Theorem-5-1-2}
\end{equation}
Next we show that $T_{S_{r}}$ is sQNE. According to Lemma \ref{lemma-1- link}(iii),
each $T_{i}$ is sQNE and, hence, similarly to the analysis made in
the proof of Lemma \ref{lemma-2 - correct set}, with the aid of \cite[Corollary 4.50]{BauschkeCombettes}
instead of \cite[Lemma 2.1.12(ii)]{Cegielski}, $T[t]$ is sQNE for
every $t\in\varOmega_{r}$, and for every $r\in\left\{ 1,2,\ldots\,,\left|\mathcal{M}'\right|\right\} $.
In particular, the latter shows that for a given $r$ and any $t\in\varOmega_{r}$,
we have, due to (\ref{eq:Theorem-5-1}), that for all $x\in D\setminus Fix(T[t])$
and $y\in Fix(T_{S_{r}})$, 
\begin{equation}
\Vert T[t](x)-y\Vert<\Vert x-y\Vert.
\end{equation}
Thus, $T[t]$ is $C$-sQNE with $C:=Fix(T_{S_{r}}).$ Consequently,
from (\ref{eq:Theorem-5-1}), (\ref{eq:Theorem-5-1-2}) and \cite[Theorem 2.1.26(i)]{Cegielski},
$T_{S_{r}}$is $C$-sQNE for every $r\in\left\{ 1,2,\ldots\,,\left|\mathcal{M}'\right|\right\} $
and, so, by the definition of $C$ and \cite[page 47]{Cegielski},
$T_{S_{r}}$ is sQNE for every $r\in\left\{ 1,2,\ldots\,,\left|\mathcal{M}'\right|\right\} $.

Therefore, since $\emptyset\neq F\subseteq\cap_{r=1}^{\left|\mathcal{M}'\right|}Fix(T_{S_{r}})$,
we can apply Proposition \ref{prop-1-Corollary 4.50 BC} to the family
$(T_{S_{r}})_{r=1}^{\left|\mathcal{M}'\right|}$, which yields that
(\ref{eq: Theorem-4-1}) holds for $(T_{S_{r}})_{r=1}^{\left|\mathcal{M}'\right|}$.
Hence, since the family $(T_{S_{r}})_{r=1}^{\left|\mathcal{M}'\right|}$
is a family of NEs and it satisfies (\ref{eq: Theorem-4-1}), we let
the role of the family $(T_{i})_{i=1}^{m}$ in Theorem \ref{thm-3-HLWB}
to be played by the family $(T_{S_{r}})_{r=1}^{\left|\mathcal{M}'\right|}$.
Moreover, by taking the sequence $(i(k))_{k\in\mathbb{N}}$ in Theorem
\ref{thm-3-HLWB} to be $(j(k))_{k\in\mathbb{N}}$, (\ref{eq: Algorithm 2 scheme})
turns out to be a special case of (\ref{eq:Theorem-4-2}) and we deduce
that any sequence $(x^{k})_{k\in\mathbb{N}}$, generated by (\ref{eq: Algorithm 2 scheme}),
converges strongly to $P_{\cap_{r=1}^{\left|\mathcal{M}'\right|}Fix(T_{S_{r}})}(u)$.

Now, (\ref{eq:Theorem-5-1}), (\ref{eq:Lemma2-3}) and the fitness
of $\varOmega$ (recall Definition \ref{def:IndexVec-StringOp-FitOmega-FiniteCase})
imply that
\begin{equation}
\cap_{r=1}^{\left|\mathcal{M}'\right|}Fix(T_{S_{r}})=\cap_{r=1}^{\left|\mathcal{M}'\right|}(\cap_{t\in\varOmega_{r}}Fix(T[t]))=\cap_{r=1}^{\left|\mathcal{M}'\right|}(\cap_{i=1}^{m}Fix(T_{i}))=F,\label{eq:intersection-DSA-HLWB-1}
\end{equation}
and, in conclusion,
\begin{equation}
x^{k}\rightarrow P_{F}(u).\label{eq:first-convergence-DSA-HLWB}
\end{equation}
\end{proof}
This concludes our treatment of the quasi-dynamic string-averaging
algorithm for solving the best approximation problem in the finite
case.

\subsection{Simultaneous string-averaging methods\label{subsec: The simultaneous QDSA}}

One possibility to define a simultaneous string-averaging method was
discussed in Remark \ref{Remark-1} above and termed ``fully simultaneous''.
By using a family of string-averaging operators $(T_{S_{r}})_{r=1}^{\left|\mathcal{M}'\right|}$,
as in Subsection \ref{subsec:The-quasi-dynamic}, and employing an
additional weight sequence $(\hat{w_{r}})_{r=1}^{\left|\mathcal{M}'\right|}$
of strictly positive real numbers such that $\sum_{r=1}^{\left|\mathcal{M}'\right|}\hat{w_{r}}=1$,
we can construct yet another algorithm of a simultaneous nature for
solving the best approximation problem to common fixed point sets
of operators in the finite case. This algorithm convexly-combines
via $(\hat{w_{r}})_{r=1}^{\left|\mathcal{M}'\right|}$ the points
$T_{S_{r}}(x^{k})$ which amounts to string-averaging the end-points
of the string operators $T[t]$ for every $t\in\varOmega_{r}$, and
for every $r\in\left\{ 1,2,\ldots,\left|\mathcal{M}'\right|\right\} $.
The scheme is as follows.

\medskip{}
\textbf{Algorithm 3.\label{alg:Str-avg + Simoultenous - FiniteCase}
The simultaneous string-averaging algorithm for solving the best approximation
problem in the finite case.}\\
\textbf{Initialization:} $x^{0}\in D.$\textbf{}\\
\textbf{Iterative step:} Given the current iterate $x^{k},$ calculate
the next iterate $x^{k+1}$ by
\begin{equation}
x^{k+1}=\lambda_{k}u+(1-\lambda_{k})\sum_{r=1}^{\left|\mathcal{M}'\right|}\hat{w_{r}}T_{S_{r}}(x^{k})=\lambda_{k}u+(1-\lambda_{k})\sum_{r=1}^{\left|\mathcal{M}'\right|}\hat{w_{r}}(\sum_{t\in\varOmega_{r}}w_{r}(t)T[t](x^{k})),
\end{equation}
where $(\lambda_{k})_{k\in\mathbb{N}}$ is a steering sequence, $u$
is the given anchor point and $(\hat{w_{r}})_{r=1}^{\left|\mathcal{M}'\right|}$
are user-chosen strictly positive real numbers such that $\sum_{r=1}^{\left|\mathcal{M}'\right|}\hat{w_{r}}=1$.\medskip{}

It is possible to obtain the convergence of Algorithm 3 from Theorem
\ref{thm-2-FiniteCase-SA} about our static string-averaging Algorithm
1. But this would limit the scope to a family $(T_{S_{r}})_{r=1}^{\left|\mathcal{M}'\right|}$
of FNEs only. Therefore, we derive the convergence of Algorithm 3
from Corollary 30.2 in \cite{BauschkeCombettes} which holds for NEs.
We do this next. Let $D$ be a nonempty closed convex subset of $H$
and let $(T_{i})_{i=1}^{m}$ be a finite family of self NEs on $D.$\medskip{}

\begin{flushleft}
\textbf{Algorithm 4.\label{alg: Simoultenous - halpern} The fully-simultaneous
algorithm for the best approximation problem in Corollary 30.2 of
\cite{BauschkeCombettes}.}\\
\textbf{Initialization:} $x^{0}\in D.$\textbf{}\\
\textbf{Iterative step:} Given the current iterate $x^{k},$ calculate
the next iterate $x^{k+1}$ by
\begin{equation}
x^{k+1}=\lambda_{k}u+(1-\lambda_{k})\sum_{i=1}^{m}w_{i}T_{i}(x^{k}),
\end{equation}
where $(\lambda_{k})_{k\in\mathbb{N}}$ is a steering sequence, $u$
is the given anchor point and $(w_{i})_{i=1}^{m}$ is a sequence of
user-chosen strictly positive numbers such that $\sum_{i=1}^{m}w_{i}=1.$
\par\end{flushleft}

\begin{flushleft}
\medskip{}
\par\end{flushleft}

A slightly rephrased version of Corollary 30.2 of \cite{BauschkeCombettes}
is as follows.
\begin{thm}
\label{thm-5-simultaneous halpern}Let $D$ be a nonempty closed convex
subset of $H$ and let $(T_{i})_{i=1}^{m}$ be a finite family of
self NEs on $D$ such that $F:=\cap_{i=1}^{m}Fix(T_{i})\neq\emptyset$.
Let $(w_{i})_{i=1}^{m}$ be a sequence of strictly positive real numbers
such that $\sum_{i\in I}w_{i}=1$ and let $(\lambda_{k})_{k\in\mathbb{N}}$
be a steering sequence. Let $u,x^{0}\in D$. Then any sequence $(x^{k})_{k\in\mathbb{N}}$,
generated by Algorithm 4, converges strongly to $P_{F}(u)$.
\end{thm}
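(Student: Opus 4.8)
The plan is to recast Algorithm 4 as a single Halpern iteration driven by the averaged operator $S:=\sum_{i=1}^{m}w_{i}T_{i}$ and then invoke Halpern's Theorem \ref{thm-1-Halpern}. With this notation the iterative step of Algorithm 4 reads $x^{k+1}=\lambda_{k}u+(1-\lambda_{k})S(x^{k})$, which is precisely the scheme (\ref{eq: Halpern}) with $S$ playing the role of the nonexpansive operator. Hence, once I verify that $S$ is NE and that $Fix(S)=F\neq\emptyset$, Theorem \ref{thm-1-Halpern} delivers strong convergence of $(x^{k})_{k\in\mathbb{N}}$ to $P_{Fix(S)}(u)=P_{F}(u)$, which is the claim.

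For the nonexpansiveness of $S$, I would argue exactly as for the operator $T$ in the proof of Theorem \ref{thm-2-FiniteCase-SA}: since each $T_{i}$ is NE and $(w_{i})_{i=1}^{m}$ consists of strictly positive weights summing to $1$, $S$ is a convex combination of NEs and therefore NE by \cite[Lemma 2.1.12(i)]{Cegielski}. For the fixed point set, one cannot simply reuse Lemma \ref{lemma-2 - correct set}, since that lemma presupposes \emph{firmly} nonexpansive operators whereas the $T_{i}$ here are only assumed NE. The bridge is the observation that $F\neq\emptyset$ forces $Fix(T_{i})\supseteq F\neq\emptyset$ for each $i$, and a nonexpansive operator with a fixed point is quasi-nonexpansive: for every $y\in Fix(T_{i})$ and $x\in D$ one has $\Vert T_{i}(x)-y\Vert=\Vert T_{i}(x)-T_{i}(y)\Vert\leq\Vert x-y\Vert$. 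Thus $(T_{i})_{i=1}^{m}$ is a finite family of QNEs with $\cap_{i=1}^{m}Fix(T_{i})=F\neq\emptyset$, so Proposition \ref{prop-2- Prop4.47} applies directly and gives $Fix(S)=\cap_{i=1}^{m}Fix(T_{i})=F$; in particular $Fix(S)\neq\emptyset$.

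With $S$ established to be NE and $Fix(S)=F\neq\emptyset$, the single application of Halpern's Theorem \ref{thm-1-Halpern} finishes the proof. I expect the only delicate point to be the nonexpansive-to-quasi-nonexpansive reduction: because Lemma \ref{lemma-2 - correct set} is unavailable in the NE setting, the hypotheses of Proposition \ref{prop-2- Prop4.47} have to be secured by hand through that elementary estimate, after which everything reduces to a direct assembly of previously established results.
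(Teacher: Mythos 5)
Your proof is correct, but be aware that the paper offers no proof of this theorem at all: it is stated as a slightly rephrased version of Corollary 30.2 in \cite{BauschkeCombettes}, and its validity is imported purely by citation (the surrounding text then uses it to derive Theorem \ref{thm-6- simultaneous-SA}). What you have done is reconstruct, from ingredients already present in the paper, essentially the standard proof of that corollary: the averaged operator $S:=\sum_{i=1}^{m}w_{i}T_{i}$ is NE as a convex combination of NEs by \cite[Lemma 2.1.12(i)]{Cegielski} (note that the convexity of $D$ is also what makes $S$ a \emph{self-map} of $D$, a hypothesis of Theorem \ref{thm-1-Halpern} that you use tacitly and should state); $Fix(S)=F$ follows from Proposition \ref{prop-2- Prop4.47} after your NE-plus-common-fixed-point-implies-QNE reduction; and Halpern's Theorem \ref{thm-1-Halpern} then closes the argument. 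Your remark about why Lemma \ref{lemma-2 - correct set} is unavailable is exactly on target, and it isolates the structural reason the paper can assert this result for mere NEs while Theorem \ref{thm-2-FiniteCase-SA} is restricted to FNEs (cf.\ the open question in Remark \ref{Remark-1}): in the purely simultaneous scheme there are no compositions, so the quasi-nonexpansivity supplied by nonexpansivity plus a common fixed point already satisfies the hypotheses of Proposition \ref{prop-2- Prop4.47}, whereas string operators of length greater than one require strict quasi-nonexpansivity through \cite[Corollary 4.50]{BauschkeCombettes}, which is where firm nonexpansivity is genuinely needed. In short, your route buys a self-contained derivation within the paper's own toolkit; the paper's citation buys brevity at the cost of opacity about exactly this point.
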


Algorithm 3 is now a special case of Algorithm 4 and its convergence
follows from the above theorem.

\begin{thm}
\label{thm-6- simultaneous-SA}Let $D$ be a nonempty closed convex
subset of $H$ and let $(T_{i})_{i=1}^{m}$ be a finite family of
self FNEs on $D$ such that $F:=\cap_{i=1}^{m}Fix(T_{i})\neq\emptyset$.
Let $(T_{S_{r}})_{r=1}^{\left|\mathcal{M}'\right|}$ be as in Theorem
\ref{thm-4- QDSA-HLWB}, let $(\hat{w_{r}})_{r=1}^{\left|\mathcal{M}'\right|}$
be a sequence of strictly positive real numbers such that $\sum_{r=1}^{\left|\mathcal{M}'\right|}\hat{w_{r}}=1$,
and let $(\lambda_{k})_{k\in\mathbb{N}}$ be a steering sequence.
Let $u,x^{0}\in D.$ Then any sequence $(x^{k})_{k\in\mathbb{N}}$,
generated by Algorithm 3, converges strongly to $P_{F}(u)$.
\end{thm}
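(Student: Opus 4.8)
The plan is to recognize Algorithm 3 as a direct instance of the fully-simultaneous Halpern scheme of Algorithm 4 applied to the family of string-averaging operators $(T_{S_r})_{r=1}^{|\mathcal{M}'|}$, so that the assertion drops out of Theorem \ref{thm-5-simultaneous halpern}. Concretely, I would let the role of the family $(T_i)_{i=1}^m$ in Theorem \ref{thm-5-simultaneous halpern} be played by $(T_{S_r})_{r=1}^{|\mathcal{M}'|}$, and the role of the weights $(w_i)_{i=1}^m$ be played by $(\hat{w_r})_{r=1}^{|\mathcal{M}'|}$, which by hypothesis are strictly positive and sum to one. The iterative step of Algorithm 3 reads $x^{k+1} = \lambda_k u + (1-\lambda_k)\sum_{r=1}^{|\mathcal{M}'|} \hat{w_r} T_{S_r}(x^k)$, which is literally the update of Algorithm 4 under this identification.

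To invoke Theorem \ref{thm-5-simultaneous halpern}, I must verify its two hypotheses for the family $(T_{S_r})_{r=1}^{|\mathcal{M}'|}$: that every $T_{S_r}$ is NE and that the common fixed point set $\cap_{r=1}^{|\mathcal{M}'|} Fix(T_{S_r})$ is nonempty. Both facts are already available from the proof of Theorem \ref{thm-4- QDSA-HLWB}. There it was shown that each $T_{S_r} = \sum_{t\in\varOmega_r} w_r(t) T[t]$ is a convex combination of NEs and hence NE, that $Fix(T_{S_r}) = \cap_{t\in\varOmega_r} Fix(T[t])$ (via Proposition \ref{prop-2- Prop4.47}), and that, using the fitness of the index sets, $\cap_{r=1}^{|\mathcal{M}'|} Fix(T_{S_r}) = F$, which is exactly equation (\ref{eq:intersection-DSA-HLWB-1}). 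Since $F \neq \emptyset$ by assumption, both hypotheses hold.

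The concluding step is then immediate: Theorem \ref{thm-5-simultaneous halpern} yields strong convergence of any sequence generated by Algorithm 3 to $P_{\cap_{r=1}^{|\mathcal{M}'|} Fix(T_{S_r})}(u)$, and rewriting the projection set as $F$ by (\ref{eq:intersection-DSA-HLWB-1}) gives $x^k \to P_F(u)$, as required.

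I do not expect a genuine obstacle here, since the substantive work, namely establishing the NE property of the string-averaging operators and the identity $\cap_r Fix(T_{S_r}) = F$, was already carried out for Theorem \ref{thm-4- QDSA-HLWB}. The only point deserving care is that Theorem \ref{thm-5-simultaneous halpern} requires merely nonexpansiveness of its component operators, so I would avoid appealing to the stronger sQNE property that featured in the quasi-dynamic proof; the NE property together with the nonempty intersection suffices, and both are inherited directly from that earlier analysis. This is also precisely why the convergence is derived from Corollary 30.2 of \cite{BauschkeCombettes} rather than from the static Theorem \ref{thm-2-FiniteCase-SA}, since the latter route would artificially restrict $(T_{S_r})_{r=1}^{|\mathcal{M}'|}$ to be a family of FNEs.
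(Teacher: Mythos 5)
Your proposal is correct and follows essentially the same route as the paper's own proof: both identify Algorithm 3 as an instance of Algorithm 4 for the family $(T_{S_{r}})_{r=1}^{\left|\mathcal{M}'\right|}$, import the nonexpansiveness and nonempty common fixed point set of these operators from the proof of Theorem \ref{thm-4- QDSA-HLWB}, apply Theorem \ref{thm-5-simultaneous halpern}, and conclude via the identity (\ref{eq:intersection-DSA-HLWB-1}) that the limit is $P_{F}(u)$. Your closing observation about preferring Corollary 30.2 of \cite{BauschkeCombettes} over Theorem \ref{thm-2-FiniteCase-SA}, so as not to force the $T_{S_{r}}$ to be FNEs, matches the remark the authors make just before stating Algorithm 4.
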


\begin{proof}
In the proof of Theorem \ref{thm-4- QDSA-HLWB} we showed that the
family $(T_{S_{r}})_{r=1}^{\left|\mathcal{M}'\right|}$ is a family
of NEs with a nonempty common fixed points set and, so, Theorem \ref{thm-5-simultaneous halpern}
implies that any sequence $(x^{k})_{k\in\mathbb{N}},$ generated by
Algorithm 3, converges strongly to $P_{\cap_{r=1}^{\left|\mathcal{M}'\right|}Fix(T_{S_{r}})}(u)$.
Now, from (\ref{eq:intersection-DSA-HLWB-1}), we conclude that
\begin{equation}
x^{k}\rightarrow P_{F}(u).
\end{equation}
\end{proof}

\section{String-averaging with orthogonal projections for the best approximation
to a finite family of closed convex sets\label{sec:Special case - orthogonal projctions}}

In this section we specialize our results, on string-averaging methods
for solving the best approximation problem, to orthogonal projections
since they are known to be FNEs (see, e.g., \cite[Facts 1.5(i)]{BauschkeBorwein}).
We start by looking at the static string-averaging method with orthogonal
projections, which is a special case of our Algorithm 1. We show that
the simultaneous version of the Halpern-Lions-Wittman-Bauschke (HLWB)
algorithm, used in \cite[Algorithm 5]{Censor}, and the sequential
Halpern-Wittman algorithm (see, e.g., Bauschke and Koch \cite[Algorithm 4.1]{BauschkeKoch})
are special cases of the string-averaging methods.

\subsection{The static string-averaging method for a finite family of closed
convex sets}

We write down formally the static string-averaging method for a finite
family of orthogonal projections in order to make sure that our string
operators in such a case remain well-defined. Let $D\subseteq H$
be a subset of $H$, let $(C_{i})_{i=1}^{m}$ be a finite family of
closed convex sets $C_{i}\subseteq D$ for every $i\in\left\{ 1,2,\ldots.m\right\} $.
We denote by $P_{C_{i}}:D\rightarrow C_{i}$ the orthogonal projection
onto the closed convex set $C_{i}$. We now present the static string-averaging
method for a finite family of closed convex sets.

\medskip{}

\begin{flushleft}
\textbf{Algorithm 5. \label{alg:SA-static-orthogonal projections}
The static string-averaging algorithm for the best approximation to
a finite family of closed convex sets.}\\
\textbf{Initialization: }Choose a single pair $(\varOmega,w)\in\mathcal{M}$
and an arbitrary $x^{0}\in D.$\textbf{}\\
\textbf{Iterative step:} Given the current iterate $x^{k},$ calculate
the next iterate $x^{k+1}$ by
\begin{equation}
x^{k+1}=\lambda_{k}u+(1-\lambda_{k})\sum_{t\in\Omega}w(t)P[t](x^{k}),\label{eq: SA-static-orthogonal projections}
\end{equation}
where $(\lambda_{k})_{k\in\mathbb{N}}$ is a steering sequence, $u$
is the given anchor point and $w(t)$ and $P[t]$ are as in Definition
\ref{def:IndexVec-StringOp-FitOmega-FiniteCase} with $(T_{i})_{i=1}^{m}=(P_{C_{i}})_{i=1}^{m}$
and $T[t]=P[t]$.\medskip{}
\par\end{flushleft}

The proof of convergence of Algorithm 5 follows.
\begin{thm}
\label{thm-10-SA-static-orthogonal projections} Let $D$ be a nonempty
closed convex subset of $H$, let $(C_{i})_{i=1}^{m}$ be a finite
family of closed convex sets $C_{i}\subseteq D$ for every $i\in\left\{ 1,2,\ldots.m\right\} $
such that $C:=\cap_{i=1}^{m}C_{i}\neq\emptyset$. Let $(\varOmega,w)\in\mathcal{M}$,
let $(\lambda_{k})_{k\in\mathbb{N}}$ be a steering sequence and let
$u,x^{0}\in D$. Then any sequence $(x^{k})_{k\in\mathbb{N}}$, generated
by Algorithm 5, converges strongly to $P_{C}(u)$.
\end{thm}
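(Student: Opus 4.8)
The plan is to recognize that Theorem \ref{thm-10-SA-static-orthogonal projections} is nothing more than Theorem \ref{thm-2-FiniteCase-SA} specialized to the case where the given family of firmly nonexpansive operators consists of orthogonal projections. The entire work, therefore, reduces to verifying that the hypotheses of Theorem \ref{thm-2-FiniteCase-SA} are met in this setting and then quoting its conclusion.

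First I would recall that orthogonal projections onto closed convex sets are firmly nonexpansive; this is precisely the fact cited at the opening of Section \ref{sec:Special case - orthogonal projctions} (see \cite[Facts 1.5(i)]{BauschkeBorwein}). Hence the family $(P_{C_{i}})_{i=1}^{m}$ is a finite family of self FNEs on $D$, matching the role of $(T_{i})_{i=1}^{m}$ in Theorem \ref{thm-2-FiniteCase-SA}. Second, I would identify the common fixed point set with the intersection of the sets: for each $i$, one has $Fix(P_{C_{i}})=C_{i}$, so that
\begin{equation}
F:=\bigcap_{i=1}^{m}Fix(P_{C_{i}})=\bigcap_{i=1}^{m}C_{i}=C.
\end{equation}
The hypothesis $C\neq\emptyset$ of the present theorem therefore translates exactly into the hypothesis $F\neq\emptyset$ required by Theorem \ref{thm-2-FiniteCase-SA}.

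With these identifications in place, the iterative step (\ref{eq: SA-static-orthogonal projections}) of Algorithm 5, written with $P[t]$ and $(P_{C_{i}})_{i=1}^{m}$, coincides term-for-term with the iterative step of Algorithm 1 under the substitution $T[t]=P[t]$ and $(T_{i})_{i=1}^{m}=(P_{C_{i}})_{i=1}^{m}$, exactly as stipulated in the initialization of Algorithm 5. Since $(\varOmega,w)\in\mathcal{M}$, $(\lambda_{k})_{k\in\mathbb{N}}$ is a steering sequence and $u,x^{0}\in D$, every hypothesis of Theorem \ref{thm-2-FiniteCase-SA} is satisfied. Applying that theorem yields that any sequence $(x^{k})_{k\in\mathbb{N}}$ generated by Algorithm 5 converges strongly to $P_{F}(u)$, and substituting $F=C$ gives the claimed strong convergence to $P_{C}(u)$.

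I do not anticipate a genuine obstacle here, since the result is a specialization rather than a new convergence argument; the only point deserving care is the verification that $Fix(P_{C_{i}})=C_{i}$ and hence $F=C$, which is what licenses replacing $P_{F}(u)$ by $P_{C}(u)$ in the conclusion. Everything else is a direct citation of Theorem \ref{thm-2-FiniteCase-SA}.
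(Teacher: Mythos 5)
Your proposal is correct and follows essentially the same route as the paper: the paper's own proof likewise derives the result as a direct specialization of Theorem \ref{thm-2-FiniteCase-SA} by taking the family of FNEs to be $(P_{C_{i}})_{i=1}^{m}$. In fact you are slightly more careful than the paper, since you make explicit the identification $Fix(P_{C_{i}})=C_{i}$ and hence $F=C$, which the paper leaves implicit.
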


\begin{proof}
This is a straightforward consequence of Theorem \ref{thm-2-FiniteCase-SA}.
Set the finite family of FNEs in Theorem \ref{thm-2-FiniteCase-SA}
to be $(P_{C_{i}})_{i=1}^{m}$. Then (\ref{eq: SA-static-orthogonal projections})
turns out to be special case of (\ref{eq: Halpern}) and, so, by Theorem
\ref{thm-2-FiniteCase-SA}, any sequence $(x^{k})_{k\in\mathbb{N}}$,
generated by Algorithm 5, converges strongly to $P_{C}(u).$
\end{proof}

\subsection{The simultaneous Halpern-Lions-Wittman-Bauschke algorithm as a special
case}

In \cite{Combettes} a parallel version of Halpern's algorithm leads
to a simultaneous Halpern-Lions-Wittman-Bauschke (HLWB) algorithm
for a countable family of FNEs, see also Deutsch and Yamada \cite{YamadaDeutsch}.
Here we present a simultaneous HLWB algorithm for the case of a finite
family of closed convex sets. The convergence of this algorithm follows
directly by choosing strings that are singletons such that each index
$i\in\left\{ 1,2,\ldots.m\right\} $ appears in one string. Thus,
this algorithm is not only a consequence of the above mentioned work
of Combettes but also a consequence of our work here. Again, let $D$
be a nonempty closed convex subset of $H$, let $(C_{i})_{i=1}^{m}$
be a finite family of closed convex sets $C_{i}\subseteq D$ for every
$i\in\left\{ 1,2,\ldots.m\right\} $ such that $C:=\cap_{i=1}^{m}C_{i}\neq\emptyset$.\medskip{}

\begin{flushleft}
\textbf{Algorithm 6. \label{alg:simultaneous Halpern-Lions-Wittman-Bauschke  }
The simultaneous HLWB algorithm with orthogonal projections for the
best approximation to a finite family of closed convex sets.}\\
\textbf{Initialization:} $x^{0}=u.$\textbf{}\\
\textbf{Iterative step:} Given the current iterate $x^{k},$ calculate
the next iterate $x^{k+1}$ by
\begin{equation}
x^{k+1}=\lambda_{k}u+(1-\lambda_{k})\sum_{i=1}^{m}w_{i}P_{i}(x^{k}),\label{eq: s-Halpern=002013Lions=002013Wittmann=002013Bauschke}
\end{equation}
where $(\lambda_{k})_{k\in\mathbb{N}}$ is a steering sequence, $u$
is the given anchor point and $(w_{i})_{i=1}^{m}$ is a sequence of
user-chosen strictly positive real numbers such that $\sum_{i\in I}w_{i}=1$.
\par\end{flushleft}

\subsection{The Halpern-Wittman algorithm as a special case}

Following the results in \cite{Halpern}, Wittman \cite{Wittman}
showed the convergence of an algorithm that is presented in \cite[Algorithm 4.1]{BauschkeKoch}
and is named there the Halpern-Wittman algorithm. It is designed for
a finite family of orthogonal projections and a specific steering
sequence. This algorithm is presented below as Algorithm 7 and its
convergence follows directly from the convergence of Algorithm 5 by
putting all indices of $i\in\left\{ 1,2,\ldots.m\right\} $ into a
single string. Again, let $D$ be a nonempty closed convex subset
of $H$, let $(C_{i})_{i=1}^{m}$ be a finite family of closed convex
sets $C_{i}\subseteq D$ for every $i\in\left\{ 1,2,\ldots.m\right\} $
such that $C:=\cap_{i=1}^{m}C_{i}\neq\emptyset$.

\medskip{}

\begin{flushleft}
\textbf{Algorithm 7. \label{alg:Halpern-Wittman algorithm} The Halpern-Wittman
algorithm.}\\
\textbf{Initialization:} $x^{0}=u.$\textbf{}\\
\textbf{Iterative step:} Given the current iterate $x^{k},$ calculate
the next iterate $x^{k+1}$ by 
\begin{equation}
x^{k+1}=\frac{1}{k+1}u+\frac{k}{k+1}P_{C_{m}}P_{C_{m-1}}\cdots P_{C_{1}}(x^{k}),\label{eq: Halpern=002013Wittman-algorithm}
\end{equation}
where $u$ is the given anchor point.
\par\end{flushleft}

\section{String-averaging methods for best approximation to the common fixed
points set of a family of firmly nonexpansive operators: The infinite
case \label{sec:The-infinite-case}}

In this section we propose a string-averaging method for solving the
best approximation problem to the common fixed point set of a countable
family of FNEs $(T_{i})_{i\in I},$ where $I$ is a countable set
of positive integers. We use similar terms to the ones that were used
in Section \ref{sec:The-finite-case} for the finite case, extending
Definition \ref{def:IndexVec-StringOp-FitOmega-FiniteCase} to the
countable case.

The following definition elaborates how this expansion is made.
\begin{defn}
\label{def:IndexVec-StingrOp-FitOmega-InfinteCase} Let $D$ be a
nonempty closed convex subset of $H$, let $(T_{i})_{i\in I}$ be
a countable family of self operators on $D.$ An\textit{ index vector}
is a vector of the form $t=(t_{1},t_{2},\ldots,t_{p})$ such that
$t_{\ell}\in I$ for all $\ell\in\left\{ 1,2,\ldots,p\right\} $.
For a given index vector $t=(t_{1},t_{2},\ldots,t_{q})$ we denote
its length (i.e., the number of its components) by $\gamma(t)=q$,
and define the operator $T[t]$ as the (finite) composition of the
operators $T_{i}$ whose indices appear in the index vector $t$,
namely,
\begin{equation}
T[t]:=T_{t_{q}}T_{t_{q-1}}\cdots T_{t_{1}},\label{eq:T=00005Bt=00005D}
\end{equation}
and call it a \textit{string operator}. An infinite set $\varOmega$
of index vectors is called \textit{fit} if for each $i\in I$, there
exist a vector $t=(t_{1},t_{2},\ldots,t_{p})\in\varOmega$ such that
$t_{\ell}=i$ for some $\ell\in\left\{ 1,2,\ldots,p\right\} $. Denote
by $\mathcal{M}$ the collection of all pairs $(\varOmega,w)$, where
$\varOmega$ is a\textit{ }fit \textbf{countable} set of index vectors
and $w:\varOmega\rightarrow(0,1)$ is such that $\sum_{t\in\Omega}w(t)=1$.
\end{defn}

Observe that in Definition \ref{def:IndexVec-StringOp-FitOmega-FiniteCase}
$w:\Omega\rightarrow(0,1]$ is permitted whereas here we must have
$w:\varOmega\rightarrow(0,1)$. This is due to the fact that in the
infinite case here it is impossible to put all operators $(T_{i})_{i\in I}$
in a single string operator $T[t]$.

\subsection{The static string-averaging method for the infinite case\label{subsec:staic-SA-infinite}}

Algorithm 1 handles a finite family of FNEs $(T_{i})_{i=1}^{m}$ and
throughout its iterative process, a finite number of string operators
$T\left[t\right]$ is used to construct a single string-averaging
operator $T=\sum_{t\in\Omega}w(t)T\left[t\right]$. In the infinite
case we allow an infinite family $(T_{i})_{i\in I}$. In our extension
to the infinite case $\varOmega$ is countable (not finite as it was
before) and we allow a countable number of string operators $T\left[t\right]$
and a chosen fixed (infinite dimensional) weight vector, making now
the single string-averaging operator $T:=\sum_{t\in\Omega}w(t)T\left[t\right]$
an infinite series. The fact that only a single $T$ is used makes
this a ``static'' string-averaging method. Algorithm 8 below is
our suggested method for this case.\medskip{}

\begin{flushleft}
\textbf{Algorithm 8.\label{alg:str-avg-InfiniteCase} The static string-averaging
algorithm for solving the best approximation problem in the countable
case.}\\
\textbf{Initialization: }Choose a single pair $(\varOmega,w)\in\mathcal{M}$
and an arbitrary $x^{0}\in D.$\textbf{}\\
\textbf{iterative step:} Given the current iterate $x^{k},$ calculate
the next iterate $x^{k+1}$ by
\begin{equation}
x^{k+1}=\lambda_{k}u+(1-\lambda_{k})\sum_{t\in\Omega}w(t)T\left[t\right]\left(x^{k}\right),\label{eq:Algorithm-9-formula}
\end{equation}
where $(\lambda_{k})_{k\in\mathbb{N}}$ is a steering sequence, $u$
is the given anchor point and $w(t)$ and $T[t]$ are as in Definition
\ref{def:IndexVec-StingrOp-FitOmega-InfinteCase}.\medskip{}
\par\end{flushleft}

Our aim is to use again Theorem \ref{thm-1-Halpern} above to prove
the convergence of Algorithm 8 to the projection of the anchor point
onto the common fixed point set of the initial given family of FNEs.
Recall that, when we did so earlier for the finite case, we used Lemma
\ref{lemma-2 - correct set}, which in turn, depended on Propositions
\ref{prop-1-Corollary 4.50 BC} and \ref{prop-2- Prop4.47}. As we
are now dealing with the countable case, we first extend these propositions
to the countable case. Note that since the length of any index vector
remains finite in our countable case, Proposition \ref{prop-1-Corollary 4.50 BC}
remains applicable. Hence, we only slightly adjust Proposition \ref{prop-2- Prop4.47}
as follows and present an elementary proof of it.
\begin{prop}
\label{prop-4- Prop4.47-infinite case}Let $D$ be a nonempty subset
of $H$, let $(T_{i})_{i\in I}$ be a countable family of self QNEs
on $D$ such that $\cap_{i\in I}Fix(T_{i})\neq\emptyset$ and let
$(w_{i})_{i\in I}$ be a countable sequence of strictly positive real
numbers such that $\sum_{i\in I}w_{i}=1$. Then $Fix(\sum_{i\in I}w_{i}T_{i})=\cap_{i\in I}Fix(T_{i})$.
\end{prop}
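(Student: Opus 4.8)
The plan is to establish the two inclusions separately; the easy one uses only $\sum_{i\in I}w_i=1$, while the nontrivial one rests on the strict convexity of the Hilbert norm, which I would package as a variance-type identity. First, though, I must check that $T:=\sum_{i\in I}w_iT_i$ is even well-defined on $D$, an issue absent in the finite case of Proposition \ref{prop-2- Prop4.47}. Fixing any $z\in\cap_{i\in I}Fix(T_i)$ (nonempty by hypothesis), the quasi-nonexpansivity of each $T_i$ gives $\|T_i(x)-z\|\le\|x-z\|$ for every $x\in D$, so $\sum_{i\in I}w_i\|T_i(x)-z\|\le\|x-z\|<\infty$. Hence $\sum_{i\in I}w_i(T_i(x)-z)$ converges absolutely, and completeness of $H$ makes $T(x)=z+\sum_{i\in I}w_i(T_i(x)-z)$ well-defined for every $x\in D$. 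The inclusion $\cap_{i\in I}Fix(T_i)\subseteq Fix(T)$ is then immediate, since $T_i(x)=x$ for all $i$ yields $T(x)=\sum_{i\in I}w_ix=x$.

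For the reverse inclusion, let $x\in Fix(T)$, fix $z\in\cap_{i\in I}Fix(T_i)$, and set $R:=\|x-z\|$ and $v_i:=T_i(x)-z$. Quasi-nonexpansivity gives $\|v_i\|\le R$, while $v:=\sum_{i\in I}w_iv_i=T(x)-z=x-z$ has $\|v\|=R$. Therefore
$$R=\|v\|=\Big\|\sum_{i\in I}w_iv_i\Big\|\le\sum_{i\in I}w_i\|v_i\|\le\sum_{i\in I}w_iR=R,$$
so equality holds throughout; in particular $\sum_{i\in I}w_i(R-\|v_i\|)=0$ with nonnegative summands, forcing $\|v_i\|=R$ for every $i$. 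I would then use the convex-combination identity, obtained by expanding inner products,
$$\sum_{i\in I}w_i\|v_i-v\|^2=\sum_{i\in I}w_i\|v_i\|^2-\|v\|^2,$$
which now reads $\sum_{i\in I}w_i\|v_i-v\|^2=R^2-R^2=0$. Since each $w_i>0$, this gives $v_i=v$ for all $i$, i.e.\ $T_i(x)-z=x-z$, so $T_i(x)=x$ and $x\in\cap_{i\in I}Fix(T_i)$.

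The step I expect to be the crux is the passage from equality in the triangle inequality to the conclusion $v_i=v$ for all $i$: this is exactly where strict convexity of the Hilbert norm enters, and the variance identity lets me carry it out uniformly over the (possibly infinite) index set once the terms are known to share the common norm $R$. The only genuinely new difficulty relative to the finite setting is securing convergence of the defining series, which is why I establish well-definedness at the outset; after that, every manipulation above---absolute convergence, the termwise estimates, and the identity---extends verbatim from finite to countable sums.
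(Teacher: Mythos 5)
Your proof is correct, and your well-definedness argument (absolute convergence of $\sum_{i\in I}w_{i}(T_{i}(x)-z)$ anchored at a common fixed point $z$) matches the paper's Cauchy-criterion argument in substance. For the reverse inclusion, however, you take a recognizably different route. The paper fixes $z\in Fix(Q)$, $Q:=\sum_{i\in I}w_{i}T_{i}$, and $y\in\cap_{i\in I}Fix(T_{i})$, derives from quasi-nonexpansivity the per-index inequality $2\left\langle T_{i}(z)-z,z-y\right\rangle \leq-\Vert T_{i}(z)-z\Vert^{2}$, and sums with the weights to obtain $0=2\left\langle Q(z)-z,z-y\right\rangle \leq-\sum_{i\in I}w_{i}\Vert T_{i}(z)-z\Vert^{2}$, after explicitly verifying that this last series converges; positivity of the $w_{i}$ then forces $T_{i}(z)=z$ for all $i$. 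You instead pass through equality in the triangle inequality to force $\Vert v_{i}\Vert=R$ for every $i$, and then use the variance identity $\sum_{i\in I}w_{i}\Vert v_{i}-v\Vert^{2}=\sum_{i\in I}w_{i}\Vert v_{i}\Vert^{2}-\Vert v\Vert^{2}$ to conclude $v_{i}=v$. Since $v_{i}-v=T_{i}(x)-x$, your identity is the paper's inequality rearranged, so the two arguments rest on the same expansion of squares; but your packaging has a consequence you did not exploit: the triangle-inequality step is redundant. The quasi-nonexpansivity bound $\Vert v_{i}\Vert\leq R=\Vert v\Vert$ alone already gives $\sum_{i\in I}w_{i}\Vert v_{i}-v\Vert^{2}\leq R^{2}-R^{2}=0$ directly (Jensen's inequality for $\Vert\cdot\Vert^{2}$), so the norm-equality argument can simply be deleted. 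One small point of rigor: where you assert that the identity ``extends verbatim'' to countable sums, you should record the bounds $\sum_{i\in I}w_{i}\Vert v_{i}\Vert^{2}\leq R^{2}$ and $\sum_{i\in I}w_{i}|\left\langle v_{i},v\right\rangle |\leq R^{2}$ that justify the term-by-term interchanges; the paper devotes (\ref{eq:47})--(\ref{eq:48}) precisely to this convergence check, and with your estimates it is routine but deserves a line.
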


\begin{proof}
First, let us show that $\sum_{i\in I}w_{i}T_{i}$ is a well-defined
operator. That is, for every $x\in D$, we have to show that $\sum_{i\in I}w_{i}T_{i}(x)$
converges as an infinite series. Fix $x\in D$, let $f\in\cap_{i\in I}Fix(T_{i})$
and let $T_{i}\in(T_{i})_{i\in I}$. First, we observe that, by the
quasi-nonexpansivity of $T_{i}$, 
\begin{equation}
\Vert T_{i}(x)\Vert=\Vert T_{i}(x)-f+f\Vert\leq\Vert T_{i}(x)-f\Vert+\Vert f\Vert\leq\Vert x-f\Vert+\Vert f\Vert,\label{eq: 40}
\end{equation}
which shows that the series

\begin{equation}
\sum_{i\in I}w_{i}(\Vert x-f\Vert+\Vert f\Vert)=\Vert x-f\Vert+\Vert f\Vert\label{eq: 41}
\end{equation}
converges. Define the sequence of partial sums of $\sum_{i\in I}w_{i}T_{i}(x)$,
i.e., for every $n\in\mathbb{N},$ $S_{n}:=\sum_{i=1}^{n}w_{i}T_{i}(x)$.

In order to apply the Cauchy criterion for series convergence, we
use (\ref{eq: 40}) to obtain
\begin{equation}
\Vert S_{n+p}-S_{n}\Vert=\Vert\sum_{i=n+1}^{n+p}w_{i}T_{i}(x)\Vert\leq\sum_{i=n+1}^{n+p}w_{i}\Vert T_{i}(x)\Vert\leq\sum_{i=n+1}^{n+p}w_{i}(\Vert x-f\Vert+\Vert f\Vert).\label{eq: 42}
\end{equation}
Since the series in (\ref{eq: 41}) is convergent it follows that
\begin{equation}
\sum_{i=n+1}^{n+p}w_{i}(\Vert x-f\Vert+\Vert f\Vert)<\varepsilon,
\end{equation}
which shows that $(S_{n})_{n\in\mathbb{N}}$ is a Cauchy sequence,
thus, from the completeness of $H$, $(S_{n})_{n\in\mathbb{N}}$ converges
and, hence, $\sum_{i\in I}w_{i}T_{i}(x)$ converges as well and $\sum_{i\in I}w_{i}T_{i}(x$)
is well-defined, as required.

The rest of the proof is similar to the proof of Corollary 4.48 in
\cite{BauschkeCombettes}, which is quoted above in our Proposition
\ref{prop-2- Prop4.47}. Set $Q:=\sum_{i\in I}w_{i}T_{i}.$ It is
clear that $\cap_{i\in I}Fix(T_{i})\subseteq Fix(Q)$. To handle the
opposite inclusion, we first do the following calculation. Let $y\in\cap_{i\in I}Fix(T_{i})$.
Then, for every $i\in I$ and any $x\in D,$
\begin{equation}
\begin{aligned}\Vert T_{i}(x)-y\Vert^{2} & =\Vert(T_{i}(x)-x)+(x-y)\Vert^{2}\\
 & =\Vert T_{i}(x)-x\Vert^{2}+2\left\langle T_{i}(x)-x,x-y\right\rangle +\Vert x-y\Vert^{2},
\end{aligned}
\end{equation}
which, together with the quasi-nonexpansivity of $T_{i},$ yields
\begin{equation}
\begin{aligned} & 2\left\langle T_{i}(x)-x,x-y\right\rangle \\
 & =\Vert T_{i}(x)-y\Vert^{2}-\Vert T_{i}(x)-x\Vert^{2}-\Vert x-y\Vert^{2}\\
 & \leq\Vert x-y\Vert^{2}-\Vert T_{i}(x)-x\Vert^{2}-\Vert x-y\Vert^{2}\\
 & =-\Vert T_{i}(x)-x\Vert^{2}.
\end{aligned}
\label{eq:45}
\end{equation}
Now let us take a point $z\in Fix(Q)$ and observe that it can be
rewritten as $z=\sum_{i\in I}w_{i}z$. Then, from (\ref{eq:45}) it
follows that 
\begin{equation}
0=2\left\langle Q(z)-z,z-y\right\rangle =2\sum_{i\in I}w_{i}\left\langle T_{i}(z)-z,z-y\right\rangle \leq-\sum_{i\in I}w_{i}\Vert T_{i}(z)-z\Vert^{2},\label{eq:46}
\end{equation}
but only if the right-hand side series $-\sum_{i\in I}w_{i}\Vert T_{i}(z)-z\Vert^{2}$
is convergent. To prove this last claim we use (\ref{eq: 40}) and
the Cauchy-Schwarz inequality, to reach
\begin{equation}
\left\langle T_{i}(z),z\right\rangle \leq|\left\langle T_{i}(z),z\right\rangle |\leq\Vert T_{i}(z)\Vert\Vert z\Vert\leq(\Vert z-f\Vert+\Vert f\Vert)\Vert z\Vert,\label{eq:47}
\end{equation}
for any $f\in\cap_{i\in I}Fix(T_{i})$. Hence, by (\ref{eq:47}),
and, once more, by (\ref{eq: 40}), we have
\begin{equation}
\sum_{i\in I}w_{i}\Vert T_{i}(z)-z\Vert^{2}=\sum_{i\in I}w_{i}\Vert T_{i}(z)\Vert^{2}-2\sum_{i\in I}w_{i}\left\langle T_{i}(z),z\right\rangle +\sum_{i\in I}w_{i}\Vert z\Vert^{2}.\label{eq:48}
\end{equation}
The first infinite sum on the right-hand side of (\ref{eq:48}) is
a convergent series due to (\ref{eq: 40}), the middle series on the
right-hand side of (\ref{eq:47}) is convergent because of (\ref{eq:47}),
and the last series converges trivially, thus, the series $\sum_{i\in I}w_{i}\Vert T_{i}(z)-z\Vert^{2}$
is convergent and (\ref{eq:46}) guarantees that $\sum_{i\in I}w_{i}\Vert T_{i}(z)-z\Vert^{2}=0.$
Since $w_{i}\neq0$ for all $i\in I,$ we obtain that $z\in\cap_{i\in I}Fix(T_{i})$.
\end{proof}
We are now ready to extend Lemma \ref{lemma-2 - correct set} to the
countable case.
\begin{lem}
\textbf{\label{lemma-3-correct set infinite}}Let $D$ be a nonempty
closed convex subset of $H$ and let $(T_{i})_{i\in I}$ be a countable
family of self FNEs on $D$ such that $F:=\cap_{i\in I}Fix(T_{i})\neq\emptyset$.
Let the single pair $(\varOmega,w)\in\mathcal{\mathcal{M}}$ and let
$T:=\sum_{t\in\Omega}w(t)T[t]$ be the string-averaging operator.
Then $Fix(T)=\cap_{i\in I}Fix(T_{i})$.
\end{lem}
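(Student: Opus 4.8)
The plan is to follow the proof of Lemma \ref{lemma-2 - correct set} almost verbatim, replacing the finite Proposition \ref{prop-2- Prop4.47} by its countable extension, Proposition \ref{prop-4- Prop4.47-infinite case}, which has just been established. The first observation I would record is that, although $\varOmega$ is now countable, every individual string $t\in\varOmega$ still has finite length $\gamma(t)$, so the string operator $T[t]$ in (\ref{eq:T=00005Bt=00005D}) remains a \emph{finite} composition. Consequently every assertion about finite compositions applies to each $T[t]$ unchanged. Since $F\neq\emptyset$ and $T[t]$ composes operators that each fix every point of $F$, I obtain $F\subseteq Fix(T[t])$ for every $t\in\varOmega$, and hence $\emptyset\neq F\subseteq\cap_{t\in\varOmega}Fix(T[t])$.

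Next I would verify that $(T[t])_{t\in\varOmega}$ is a countable family of QNEs to which Proposition \ref{prop-4- Prop4.47-infinite case} applies. By Lemma \ref{lemma-1- link}(i) each $T_i$ is NE, so each finite composition $T[t]$ is NE; and as its fixed point set contains the nonempty set $F$, each $T[t]$ is QNE. The weights $(w(t))_{t\in\varOmega}$ are strictly positive and sum to one by the definition of $\mathcal{M}$ in Definition \ref{def:IndexVec-StingrOp-FitOmega-InfinteCase}. These are precisely the hypotheses of Proposition \ref{prop-4- Prop4.47-infinite case}, and applying it to the family $(T[t])_{t\in\varOmega}$ (in the role of $(T_i)_{i\in I}$) yields both that the infinite series $T=\sum_{t\in\varOmega}w(t)T[t]$ is a well-defined operator and that $Fix(T)=\cap_{t\in\varOmega}Fix(T[t])$.

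It then remains to identify $\cap_{t\in\varOmega}Fix(T[t])$ with $F$. Here I would invoke Lemma \ref{lemma-1- link}(iii): since $Fix(T_i)\supseteq F\neq\emptyset$, each $T_i$ is sQNE, so Proposition \ref{prop-1-Corollary 4.50 BC} applies to every finite composition $T[t]$ and gives $Fix(T[t])=\cap_{\ell=1}^{\gamma(t)}Fix(T_{t_\ell})$. Intersecting over all $t\in\varOmega$ and using the fitness of $\varOmega$ from Definition \ref{def:IndexVec-StingrOp-FitOmega-InfinteCase} (every index $i\in I$ occurs in at least one string) collapses the iterated intersection to $\cap_{i\in I}Fix(T_i)=F$. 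Combined with $Fix(T)=\cap_{t\in\varOmega}Fix(T[t])$, this closes the argument.

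I expect no genuine obstacle beyond careful bookkeeping. The only place where the countability of $I$ and of $\varOmega$ actually intervenes is in substituting Proposition \ref{prop-4- Prop4.47-infinite case} for Proposition \ref{prop-2- Prop4.47}, and the delicate convergence issue there, namely the well-definedness of the infinite series $\sum_{t\in\varOmega}w(t)T[t]$, has already been disposed of inside that proposition. The finiteness of each string length $\gamma(t)$ is exactly what keeps Proposition \ref{prop-1-Corollary 4.50 BC} usable without any modification, so the sQNE step transfers directly from the finite case.
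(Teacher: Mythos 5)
Your proposal is correct and follows essentially the same route as the paper's own proof: each string operator $T[t]$ is a finite composition of NEs fixing $F$ (hence a QNE with nonempty fixed point set), Proposition \ref{prop-4- Prop4.47-infinite case} supplies both the well-definedness of the series $T=\sum_{t\in\varOmega}w(t)T[t]$ and the identity $Fix(T)=\cap_{t\in\varOmega}Fix(T[t])$, and then Lemma \ref{lemma-1- link}(iii) together with Proposition \ref{prop-1-Corollary 4.50 BC} and the fitness of $\varOmega$ collapse the intersection to $\cap_{i\in I}Fix(T_{i})=F$. No gap to report.
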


\begin{proof}
The proof proceeds in several steps. First, the family $(T[t])_{t\in\varOmega}$
is countable because $\varOmega$ is, and since $T[t]=T_{t_{q}}T_{t_{q-1}}\cdots T_{t_{1}}$
is a finite composition we use similar arguments to those in the proof
of Lemma \ref{lemma-2 - correct set}, to show that $(T[t])_{t\in\varOmega}$
is a family of QNEs with nonempty common fixed points set. Note that
$F\neq\emptyset$ and (\ref{eq:T=00005Bt=00005D}) imply that for
every $t\in\varOmega$ and for every $x\in F$, $T[t](x)=x$. Hence,
\begin{equation}
F\subseteq Fix(T[t]).\label{eq:Lemma2-1-1}
\end{equation}
Therefore,
\begin{equation}
F\subseteq\cap_{t\in\Omega}Fix(T[t]).\label{eq:Lemma2-2-1}
\end{equation}
Since every FNE is NE (see Lemma \ref{lemma-1- link}(i)), we apply,
for every $t\in\varOmega,$ \cite[Lemma 2.1.12(ii)]{Cegielski} to
the family $(T_{t_{\ell}})_{\ell=1}^{q}$ and conclude that the string
operator $T[t]:=T_{t_{q}}T_{t_{q-1}}\cdots T_{t_{1}},$ is NE. Thus,
from (\ref{eq:Lemma2-1-1}) which guarantees the non-emptiness of
$Fix(T[t])$ and due to the fact that every NE with a fixed point
is QNE, the family of operators $(T[t])_{t\in\varOmega}$ is a family
of QNEs.

Secondly, by similar arguments to those that appear in the first part
of the proof of Proposition \ref{prop-4- Prop4.47-infinite case},
$T$ is a well-defined operator and, we can apply Proposition \ref{prop-4- Prop4.47-infinite case}
with the family $(T[t])_{t\in\varOmega}$ instead of the family $(T_{i})_{i\in I}$
and with the family $(w(t))_{t\in\varOmega}$ instead of the family
$(w_{i})_{i\in I}$, used there. Hence, we get

\begin{equation}
Fix(T)=\cap_{t\in\varOmega}Fix(T[t]).\label{eq:51}
\end{equation}

Now we apply Proposition \ref{prop-1-Corollary 4.50 BC} to the finite
family $(T_{t_{\ell}})_{\ell=1}^{q}.$ To do so we note that, by Lemma
\ref{lemma-1- link}(iii), all members of $(T_{i})_{i\in I}$ are
sQNEs and, so, for every $t\in\varOmega$, the Proposition \ref{prop-1-Corollary 4.50 BC}
yields that the string operator $T[t]$ satisfies 
\begin{equation}
Fix(T[t])=\cap_{\ell=1}^{q}Fix(T_{t_{\ell}}).\label{eq:Lemma2-3-1}
\end{equation}
 Finally, from the fitness of $\varOmega$, from (\ref{eq:51}) and
from (\ref{eq:Lemma2-3-1}), we obtain that 
\begin{equation}
Fix(T)=\cap_{t\in\varOmega}Fix(T[t])=\cap_{t\in\varOmega}(\cap_{\ell=1}^{q}Fix(T_{t_{\ell}}))=\cap_{i\in I}Fix(T_{i}).
\end{equation}
\end{proof}
It is well-known that a convex combination of NEs is a NE (see, e.g,
\cite[Lemma 2.1.12]{Cegielski}). Since the string operators $T[t]$
in Algorithm 8 appear inside an infinite series that resembles a convex
combination, our next aim is to show that, under certain assumptions,
the string-averaging operator $T=\sum_{t\in\Omega}w(t)T[t]$ is NE.
The following lemma will show this.
\begin{lem}
\label{lemma-5- infinite convex combination}Let $D$ be a nonempty
subset of $H$ and let $(S_{i})_{i\in I}$ be a countable family of
NEs such that, for every $i\in I$, $S_{i}:D\rightarrow H$ and $\cap_{i\in I}Fix(S_{i})\neq\emptyset.$
Then $S:=\sum_{i\in I}w_{i}S_{i}$, where $(w_{i})_{i\in I}$ is a
countable sequence of strictly positive real numbers such that $\sum_{i\in I}w_{i}=1$,
is NE.
\end{lem}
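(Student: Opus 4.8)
The plan is to mimic the finite-case argument (the triangle-inequality estimate used implicitly for $\lbrack$Lemma 2.1.12(i)$\rbrack$ of Cegielski), but first I must address the genuinely infinite feature of the statement: that $S=\sum_{i\in I}w_i S_i$ is actually a well-defined operator before I can even ask whether it is nonexpansive. First I would fix $x\in D$ and invoke exactly the Cauchy-criterion argument already carried out in the first part of the proof of Proposition \ref{prop-4- Prop4.47-infinite case}. Indeed, each $S_i$ is NE with $\cap_{i\in I}Fix(S_i)\neq\emptyset$, so picking $f\in\cap_{i\in I}Fix(S_i)$ gives $\Vert S_i(x)\Vert\leq\Vert x-f\Vert+\Vert f\Vert$ for all $i$ (a nonexpansive map with a fixed point is quasi-nonexpansive, so $\Vert S_i(x)-f\Vert\leq\Vert x-f\Vert$). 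Since $\sum_{i\in I}w_i(\Vert x-f\Vert+\Vert f\Vert)$ converges, the partial sums $\sum_{i=1}^{n}w_i S_i(x)$ form a Cauchy sequence in the complete space $H$, so $S(x)$ exists. Thus $S:D\rightarrow H$ is well-defined, and I would simply cite the cited portion of Proposition \ref{prop-4- Prop4.47-infinite case} rather than repeat it in full.

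The second and main step is the nonexpansivity estimate itself. For $x,y\in D$ I would write $\Vert S(x)-S(y)\Vert=\lim_{n\to\infty}\Vert\sum_{i=1}^{n}w_i(S_i(x)-S_i(y))\Vert$ using the established convergence, then bound each partial sum by the triangle inequality:
\begin{equation}
\Bigl\Vert\sum_{i=1}^{n}w_i\bigl(S_i(x)-S_i(y)\bigr)\Bigr\Vert\leq\sum_{i=1}^{n}w_i\Vert S_i(x)-S_i(y)\Vert\leq\sum_{i=1}^{n}w_i\Vert x-y\Vert,
\end{equation}
where the last inequality uses that each $S_i$ is NE. Passing to the limit $n\to\infty$ and using $\sum_{i\in I}w_i=1$ gives $\Vert S(x)-S(y)\Vert\leq\Vert x-y\Vert$, which is precisely nonexpansivity of $S$. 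The convexity of the weights enters only through $\sum_i w_i=1$, exactly as in the finite case.

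The part requiring the most care is the interchange of the norm with the infinite sum, i.e.\ justifying that the limit of the norms of the partial sums equals the norm of the limit. This is routine once well-definedness is in hand, since the norm is continuous: if $S_n(x)\to S(x)$ and $S_n(y)\to S(y)$ then $\Vert S_n(x)-S_n(y)\Vert\to\Vert S(x)-S(y)\Vert$, so the uniform bound $\Vert S_n(x)-S_n(y)\Vert\leq\Vert x-y\Vert$ is inherited in the limit. I do not anticipate any real obstacle beyond being careful that the convergence of $S(x)$ and $S(y)$ is available for every pair of points, which the first step supplies. Notably I would not need the sQNE or QNE machinery here at all; plain nonexpansivity of each $S_i$ together with the common fixed point (used only to secure convergence of the series) suffices.
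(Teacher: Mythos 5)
Your proposal is correct and follows essentially the same route as the paper's proof: well-definedness of $S$ via the Cauchy-criterion argument borrowed from Proposition \ref{prop-4- Prop4.47-infinite case}, followed by the triangle-inequality chain $\Vert S(x)-S(y)\Vert\leq\sum_{i\in I}w_{i}\Vert S_{i}(x)-S_{i}(y)\Vert\leq\Vert x-y\Vert$. The only difference is that you make explicit, via partial sums and continuity of the norm, the interchange of norm and infinite sum that the paper applies implicitly, which is a welcome extra bit of rigor rather than a deviation.
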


\begin{proof}
By using similar arguments to the ones that were used in Proposition
\ref{prop-4- Prop4.47-infinite case} for $T$, we deduce that $S$
is well-defined. Now, let $x,y\in D$. By the nonexpansivity of every
$S_{i}$ and since $\sum_{i\in I}w_{i}=1$, it follows that
\begin{equation}
\begin{aligned}||S(x)-S(y)|| & =||\sum_{i\in I}w_{i}S_{i}(x)-\sum_{i\in I}w_{i}S_{i}(y)||\leq\sum_{i\in I}||w_{i}S_{i}(x)-w_{i}S_{i}(y)||\\
 & \leq\sum_{i\in I}w_{i}||S_{i}(x)-S_{i}(y)||\leq\sum_{i\in I}w_{i}||x-y||=||x-y||.
\end{aligned}
\end{equation}
Thus, by Definition \ref{def: nonexpansive-class-operators}(i), $S$
is NE.
\end{proof}
We are now ready to prove convergence of Algorithm 8.
\begin{thm}
\label{thm-7-InfiniteCase-SA}Let $D$ be a nonempty closed convex
subset of $H$ and let $(T_{i})_{i\in I}$ be a countable family of
self FNEs on $D$ such that $F:=\cap_{i\in I}Fix(T_{i})\neq\emptyset$.
Let $(\Omega,w)\in\mathcal{\mathcal{M}}$, let $(\lambda_{k})_{k\in\mathbb{N}}$
be a steering sequence and let $u,x^{0}\in D$. Then any sequence
$(x^{k})_{k\in\mathbb{N}}$, generated by Algorithm 8, converges strongly
to $P_{F}(u)$.
\end{thm}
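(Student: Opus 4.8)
Theorem \ref{thm-7-InfiniteCase-SA} asserts strong convergence of Algorithm 8 (the infinite-case static string-averaging iteration) to $P_F(u)$.

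The plan is to mirror the finite-case argument of Theorem \ref{thm-2-FiniteCase-SA}, replacing each finite-case ingredient by its countable-case analogue that has already been established in the excerpt. The whole proof reduces to verifying that the string-averaging operator $T:=\sum_{t\in\Omega}w(t)T[t]$ satisfies the hypotheses of Halpern's Theorem \ref{thm-1-Halpern}, namely that $T$ is a self-mapping NE on $D$ with $Fix(T)\neq\emptyset$, and then identifying $Fix(T)$ with $F$.

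First I would set $T:=\sum_{t\in\Omega}w(t)T[t]$ and record that, exactly as in the proof of Lemma \ref{lemma-3-correct set infinite}, each string operator $T[t]$ is a finite composition of NEs and hence is itself NE (by \cite[Lemma 2.1.12(ii)]{Cegielski}), and that $F\subseteq Fix(T[t])$ for every $t\in\Omega$ so that $\cap_{i\in I}Fix(T_i)\neq\emptyset$ guarantees $\cap_{t\in\Omega}Fix(T[t])\neq\emptyset$. Thus $(T[t])_{t\in\Omega}$ is a countable family of NEs with nonempty common fixed point set. This is precisely the setting of Lemma \ref{lemma-5- infinite convex combination}, applied with $S_i:=T[t]$ and weights $w(t)$; that lemma yields that $T$ is well-defined and NE. Next, since $F\subseteq Fix(T[t])$ for every $t$, any point of $F$ is fixed by the convex combination $T$, so $F\subseteq Fix(T)$ and in particular $Fix(T)\neq\emptyset$. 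Having checked that $T$ is a self NE on $D$ with $Fix(T)\neq\emptyset$, I would invoke Theorem \ref{thm-1-Halpern} with $T$ in the role of $S$: since the iteration (\ref{eq:Algorithm-9-formula}) is exactly (\ref{eq: Halpern}) with $S=T$, every sequence generated by Algorithm 8 converges strongly to $P_{Fix(T)}(u)$.

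Finally I would identify the limit by applying Lemma \ref{lemma-3-correct set infinite} to the family $(T_i)_{i\in I}$ together with $(\Omega,w)$, which gives $Fix(T)=\cap_{i\in I}Fix(T_i)=F$, whence
\begin{equation}
x^k\rightarrow P_{Fix(T)}(u)=P_F(u),
\end{equation}
as claimed. I expect the only genuinely nontrivial point to be the well-definedness and nonexpansivity of the \emph{infinite series} $T=\sum_{t\in\Omega}w(t)T[t]$, since the finite-case reasoning cannot simply be transcribed. That obstacle, however, has already been isolated and discharged: convergence of the series is handled by the Cauchy-criterion argument in the first part of Proposition \ref{prop-4- Prop4.47-infinite case}, and nonexpansivity of a countable convex combination of NEs with a common fixed point is exactly Lemma \ref{lemma-5- infinite convex combination}. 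Consequently the proof is essentially a bookkeeping assembly of Lemmas \ref{lemma-3-correct set infinite} and \ref{lemma-5- infinite convex combination} with Theorem \ref{thm-1-Halpern}, paralleling Theorem \ref{thm-2-FiniteCase-SA} step for step.
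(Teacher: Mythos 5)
Your proposal is correct and follows essentially the same route as the paper's own proof: establish that the string operators $(T[t])_{t\in\Omega}$ form a countable family of NEs with $F\subseteq\cap_{t\in\Omega}Fix(T[t])$, invoke the well-definedness argument of Proposition \ref{prop-4- Prop4.47-infinite case} and Lemma \ref{lemma-5- infinite convex combination} to conclude that $T=\sum_{t\in\Omega}w(t)T[t]$ is a well-defined NE, apply Halpern's Theorem \ref{thm-1-Halpern} to get convergence to $P_{Fix(T)}(u)$, and identify $Fix(T)=F$ via Lemma \ref{lemma-3-correct set infinite}. You also correctly isolate the only genuinely new difficulty in the countable case (convergence of the infinite series), exactly as the paper does.
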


\begin{proof}
For $(T_{i})_{i\in I}$ and $(\Omega,w)\in\mathcal{\mathcal{M}}$
consider the family of operators $(T[t])_{t\in\varOmega}$ and define
the string-averaging operator $T:=\sum_{t\in\varOmega}w(t)T\left[t\right]$.
By the proof of Lemma \ref{lemma-2 - correct set}, we deduce that
$(T[t])_{t\in\varOmega}$ is a countable family of NEs and, thus,
$T$ is a ``convex combination'' of countably many NEs. Since $F\subseteq\cap_{t\in\varOmega}Fix(T[t])$,
it follows that $\cap_{t\in\varOmega}Fix(T[t])\neq\emptyset$ and,
therefore, it can be shown by using similar arguments to the ones
that were used in Proposition \ref{prop-4- Prop4.47-infinite case},
that $T$ is a well-defined operator. Moreover, Lemma \ref{lemma-5- infinite convex combination}
yields that $T$ is NE, and since $Fix(T)\supseteq F\neq\emptyset$,
Theorem \ref{thm-1-Halpern} is applicable to $T$ and implies that
every sequence $(x^{k})_{k\in\mathbb{N}}$ generated by Algorithm
8 converges strongly to $P_{Fix(T)}(u)$. Now, by applying Lemma \ref{lemma-3-correct set infinite}
on $(T_{i})_{i\in I}$ together with $(\varOmega,w)$, we obtain that
$Fix(T)=F$, thus, $x^{k}\rightarrow P_{F}(u).$
\end{proof}
In \cite{Combettes} the following simultaneous algorithm for solving
the best approximation problem to the common fixed point set of a
countable family of FNEs with a nonempty common fixed points set is
studied. This algorithm turns out to be a special case of our Algorithm
8 when the pair $(\varOmega,w)$ is chosen $\Omega:=\left\{ (1),(2),(3),\ldots\right\} $
and for every $i\in I$, $w((i)):=w_{i}$. The convergence of this
algorithm then follows from our Theorem \ref{thm-7-InfiniteCase-SA}.\medskip{}

\begin{flushleft}
\textbf{Algorithm 9. \label{alg:Combettes} Combettes simultaneous
algorithm.}\\
\textbf{Initialization: }Choose $x^{0}\in D.$\textbf{}\\
\textbf{iterative step:} Given the current iterate $x^{k},$ calculate
the next iterate $x^{k+1}$ by
\begin{equation}
x^{k+1}=\lambda_{k}u+(1-\lambda_{k})\sum_{i\in I}w_{i}T_{i}(x^{k}),\label{eq: Algorithm-Combettes}
\end{equation}
where $(\lambda_{k})_{k\in\mathbb{N}}$ is a steering sequence, $u$
is the given anchor point and $(w_{i})_{i\in I}$ is a sequence of
user-chosen strictly positive real numbers such that $\sum_{i\in I}w_{i}=1$.
\par\end{flushleft}

\section{Concluding comments\label{sec:Concluding-comments}}

In this work we expand the class of string-averaging methods by proposing
new string-averaging approaches for solving the BAP for the common
fixed points set of either a finite or an infinite family of FNEs.
These methods vary from the ``static'' format, in which one uses
a single pair $(\varOmega,w)\in\mathcal{\mathcal{M}},$ to quasi dynamic
and simultaneous formats, where more than a single pair is used. Nevertheless,
these methods are obtained with string operators which are defined
as finite compositions of operators, taken from the given family of
operators, thus, the question for infinitely many compositions, i.e.,
strings of infinite length, remains open, as well as the question
whether our methods can be extended to the fully dynamic case.

Another question that remains open is whether the results of \cite{Aleyner},
where a sequential algorithm for solving the BAP to the common fixed
points set of a semigroup of nonexpansive operators in Hilbert space
was studied, can be extended to encompass string-averaging algorithmic
schemes.

\section{Declarations}

Availability of data and materials: Not applicable.

Competing interests: The authors declare that they have no competing
interests.

Funding: This work was supported by the ISF-NSFC joint research program
grant No. 2874/19.

Authors' contributions: All authors contributed equally and significantly
in this research work. All authors read and approved the final manuscript.

Acknowledgements: We are grateful to Shoham Sabach and Alexander Zaslavski
for their very useful comments on an earlier version of this work.
We thank the Referee and the Editor for their comments.

\addcontentsline{toc}{section}{References}

\begin{newpage}

{\large{}\bibliographystyle{plain}
\bibliography{bib-031120}

\begin{thebibliography}{10}

\bibitem{Aleyner}
A.~Aleyner and Y.~Censor.
\newblock Best approximation to common fixed points of a semigroup of
  nonexpansive operators.
\newblock {\em Journal of Nonlinear and Convex Analysis}, 6:137--151, 2005.

\bibitem{Artacho}
F.J.~Arag{\'o}n Artacho and R.~Campoy.
\newblock A new projection method for finding the closest point in the
  intersection of convex sets.
\newblock {\em Computational Optimization and Applications}, 69:99--132, 2018.

\bibitem{Bargetz}
C.~Bargetz, S.~Reich, and R.~Zalas.
\newblock Convergence properties of dynamic string-averaging projection methods
  in the presence of perturbations.
\newblock {\em Numerical Algorithms}, 77:185--209, 2018.

\bibitem{HLWB}
H.H. Bauschke.
\newblock The approximation of fixed points of comositions of nonexpansive
  mappings in {H}ilbert space.
\newblock {\em Journal of Mathematical Analysis and Applications},
  202:150--159, 1996.

\bibitem{BauschkeBorweinAlternatingProjection}
H.H. Bauschke and J.M Borwein.
\newblock On the convergence of von {N}eumann's alternating projection
  algorithm for two sets.
\newblock {\em Set-Valued Analysis}, 1:185--212, 1993.

\bibitem{BauschkeBorwein}
H.H. Bauschke and J.M. Borwein.
\newblock On projection algorithms for solving convex feasibility problems.
\newblock {\em SIAM Review}, 38:367--426, 1996.

\bibitem{BauschkeCombettes}
H.H. Bauschke and P.L. Combettes.
\newblock {\em Convex Analysis and Monotone Operator Theory in Hilbert Space}.
\newblock Springer International Publishing AG, 2nd edition, 2017.

\bibitem{BauschkeKoch}
H.H. Bauschke and V.R. Koch.
\newblock Projection methods: Swiss army knives for solving feasibility and
  best approximation problems with halfspaces.
\newblock {\em Contemporary Mathematics}, 636:1--40, 2015.

\bibitem{ReichProjectionsHyperplanes}
L.M. Bergman, Y.~Censor, S.~Reich, and Y.~Zepkowitz-Malachi.
\newblock Finding the projection of a point onto the intersection of convex
  sets via projections onto half-spaces.
\newblock {\em Journal of Approximation Theory}, 124:194--218, 2003.

\bibitem{blat2006}
D.~Blat and A.O.~Hero III.
\newblock Energy based sensor network source localization via projection onto
  convex sets ({P}{O}{C}{S}).
\newblock {\em IEEE Transactions on Signal Processing}, 54:3614--3619, 2006.

\bibitem{BoyleDykstra}
J.P. Boyle and R.L. Dykstra.
\newblock A method for finding projections onto the intersection of convex sets
  in {H}ilbert spaces.
\newblock In R.~Dykstra, T.~Robertson, and F.T. Wright, editors, {\em Advances
  in Order Restricted Statistical Inference}, pages 28--47. Springer, Lecture
  Notes in Statistics, vol. 37, New York, NY, USA, 1986.

\bibitem{CQmethods}
M.~Brooke, Y.~Censor, and A.~Gibali.
\newblock Dynamic string-averaging {C}{Q}-methods for the split feasibility
  problem with percentage violation constraints arising in radiation therapy
  treatment planning.
\newblock Technical report, https://arxiv.org/abs/1911.12041., 2019.

\bibitem{Cegielski}
A.~Cegielski.
\newblock {\em Iterative Methods for Fixed Point Problems in Hilbert Spaces}.
\newblock Springer-Verlag Berlin Heidelberg, 2012.

\bibitem{Censor}
Y.~Censor.
\newblock Computational acceleration of projection algorithms for the linear
  best approximation problem.
\newblock {\em Linear Algebra and its Applications}, 416:111--123, 2006.

\bibitem{CensorCegielski}
Y.~Censor and A.~Cegielski.
\newblock Projection methods: an annotated bibliography of books and reviews.
\newblock {\em Optimization}, 64:2343--2358, 2015.

\bibitem{Pink-book}
Y.~Censor, T.~Elfving, and G.T. Herman.
\newblock Averaging strings of sequential iterations for convex feasibility
  problems.
\newblock In D.~Butnariu, Y.~Censor, and S.~Reich, editors, {\em Inherently
  Parallel Algorithms in Feasibility and Optimization and Their Applications},
  pages 101--114. Elsevier Science Publishers, Amsterdam, The Netherlands,
  2001.

\bibitem{SegalCensor}
Y.~Censor and A.~Segal.
\newblock On the string averaging method for sparse common fixed-point
  problems.
\newblock {\em International Transactions in Operational Research},
  16:481--494, 2009.

\bibitem{CensorTom}
Y.~Censor and E.~Tom.
\newblock Convergence of string-averaging projection schemes for inconsistent
  convex feasibility problems.
\newblock {\em Optimization Methods and Software}, 18:543--554, 2003.

\bibitem{CensorZaslavski1}
Y.~Censor and A.J. Zaslavski.
\newblock Convergence and perturbation resillence of dynamic string-averaging
  projection methods.
\newblock {\em Computational Optimization and Applications}, 54:65--76, 2013.

\bibitem{CensorZaslavski2}
Y.~Censor and A.J. Zaslavski.
\newblock String-averaging projected subgradient methods for constrained
  minimization.
\newblock {\em Optimization Methods and Software}, 29:658--670, 2014.

\bibitem{Combettes}
P.L. Combettes.
\newblock Construction d'un point fixe commun {\`a} une famille de contractions
  fermes.
\newblock {\em Comptes Rendus de l'Acad{\'e}mie des Sciences de Paris,
  S{\'e}rie A (Math{\'e}matique)}, 320:1385--1390, 1995.

\bibitem{Crombez}
G.~Crombez.
\newblock Finding common fixed points of strict paracontractions by averaging
  strings of sequential iterations.
\newblock {\em Journal of Nonlinear and Convex Analysis}, 3:345--351, 2002.

\bibitem{DeutchAlternatingProjections}
F.~Deutsch.
\newblock Rate of convergence of the method of alternating projections.
\newblock In B.~Brosowski and F.~Deutsch, editors, {\em Parametric Optimization
  and Approximation}, pages 96--107. International Series of Numerical
  Mathematics, vol. 72. Birkh{\"a}user, Basel, Switzerland, 1984.

\bibitem{DeutschBook}
F.~Deutsch.
\newblock {\em Best Approximation in Inner Product Spaces}.
\newblock Springer-verlag, New-York, 2001.

\bibitem{DeutschHundal}
F.~Deutsch and H.~Hundal.
\newblock The rate of convergence for the method of alternating projections
  {II}.
\newblock {\em Journal of Mathematical Analysis and Applications},
  205:381--405, 1997.

\bibitem{YamadaDeutsch}
F.~Deutsch and I.~Yamada.
\newblock Minimizing certain convex functions over the intersection of the
  fixed point sets of nonexpansive mappings.
\newblock {\em Numerical Functional Analysis and Optimization}, 19:33--56,
  1998.

\bibitem{reich1991}
J.~Dye, M.A. Khamsi, and S.~Reich.
\newblock Random products of contractions in {B}anach spaces.
\newblock {\em Transactions of the American Mathematical Society}, 325:87--99,
  1991.

\bibitem{Dykstra}
R.L. Dykstra.
\newblock An algorithm for restricted least squares regression.
\newblock {\em Journal of the American Statistical Association}, 78:837--842,
  1983.

\bibitem{Halpern}
B.~Halpern.
\newblock Fixed points of nonexpanding maps.
\newblock {\em Bulletin of the American Mathematical Society}, 73:957--961,
  1967.

\bibitem{kong2019}
T.Y. Kong, H.~Pajoohesh, and G.T. Herman.
\newblock String-averaging algorithms for convex feasibility with infinitely
  many sets.
\newblock {\em Inverse Problems}, 35, 2019.
\newblock 045011.

\bibitem{ReichKopecka}
E.~Kopeck{\'a} and S.~Reich.
\newblock A note on the von {N}eumann alternating projections algorithm.
\newblock {\em Journal of Nonlinear and Convex Analysis}, 5:379--386, 2004.

\bibitem{Lopez-Xu}
G.~L{\'o}pez, V.~Martin-M{\'a}rquez, and H.~Xu.
\newblock Halpern's iteration for nonexpansive mappings.
\newblock {\em Contemporary Mathematics}, 513:211--231, 2010.

\bibitem{ReichZalas}
S.~Reich and R.~Zalas.
\newblock A modular string averaging procedure for solving the common fixed
  point problem for quasi-nonexpansive mappings in {H}ilbert space.
\newblock {\em Numerical Algorithms}, 72:297--323, 2016.

\bibitem{NeumanBook}
J.~{v}on Neumann.
\newblock {\em Functional Operators II: The Geometry of Orthogonal Spaces}.
\newblock Princeton University Press, 1950.
\newblock Reprint of mimeographed lecture notes first distributed in 1933.

\bibitem{Wittman}
R.~Wittman.
\newblock Approximation of fixed points of nonexpansive mappings.
\newblock {\em Archiv der Mathematik}, 58:486--491, 1992.

\end{thebibliography}
}{\large\par}

\end{newpage}
\end{document}